\definecolor{cof}{RGB}{219,144,71}
\definecolor{pur}{RGB}{186,146,162}
\definecolor{greeo}{RGB}{91,173,69}
\definecolor{greet}{RGB}{52,111,72}
\newtheorem{theorem}{Theorem}[section]
\newtheorem{lemma}[theorem]{Lemma}
\newtheorem{proposition}[theorem]{Proposition}
\newtheorem{cor}[theorem]{Corollary}
\theoremstyle{definition}
\newtheorem{example}[theorem]{Example}
\theoremstyle{remark}
\newtheorem{remark}[theorem]{\bf{Remark}}
\numberwithin{equation}{section}
\newcommand{\sm}{{\rm Sm}\,}
\newcommand{\ksm}{{\rm \text{$k$-}Sm}\,}
\newcommand{\rint}{{\rm Int_r}\,}
\newcommand{\ext}{{\rm Ext}\,}
\newcommand{\aff}{\rm aff}
\newcommand{\co}{\rm co}
\newcommand{\spn}{{\rm span}}
\begin{document}
	
	\title[Preservation of parallel pairs ]{A study on  preservation of parallel pairs and triangle equality attainment pairs }
	\author[Manna, Mandal, Paul and Sain  ]{Jayanta Manna, Kalidas Mandal,    Kallol Paul and Debmalya Sain }

	\address[Manna]{Department of Mathematics\\ Jadavpur University\\ Kolkata 700032\\ West Bengal\\ INDIA}
	\email{iamjayantamanna1@gmail.com}
	
	\address[Mandal]{Department of Mathematics\\ Jadavpur University\\ Kolkata 700032\\ West Bengal\\ INDIA}
\email{kalidas.mandal14@gmail.com}
	
	\address[Paul]{Vice-Chancellor, Kalyani University \& Professor of Mathematics\\ Jadavpur University (on lien) \\ Kolkata \\ West Bengal\\ INDIA}
	\email{kalloldada@gmail.com}
	
		\newcommand{\acr}{\newline\indent}
	\address[Sain]{Department of Mathematics\\ Indian Institute of Information Technology, Raichur\\ Karnataka 584135 \\INDIA}
	\email{saindebmalya@gmail.com}

	\subjclass[2020]{Primary 46B20,  Secondary 47L05}
	\keywords{Norm parallelism; preservation of parallel pair; preservation of triangle equality attaining pair; polyhedral Banach spaces; support functionals}	
	
	\begin{abstract}
		This paper investigates the structure and preservation of parallel pairs and triangle equality attainment (TEA) pairs in normed linear spaces. We begin by providing functional characterizations of these pairs in  normed linear spaces and provide a characterization of finite-dimensional Banach spaces with numerical index one.  We then study parallel pairs and TEA pairs in the $p$-direct sum of normed linear spaces. Next, we show that in finite-dimensional Banach spaces, a bijective bounded linear operator preserves TEA pairs if and only if it preserves parallel pairs, and this equivalence remains valid in finite-dimensional polyhedral spaces for operators with rank greater than one. We further explore some geometric consequences related to this preservation and provide a characterization of isometries on certain polyhedral Banach spaces as norm-one bijective operators that preserve the norm at extreme points and also preserve the parallel or TEA pairs.

	\end{abstract}
	
	\maketitle
	
	\section{Introduction.}
    The notions of parallel pairs and triangle equality attainment (TEA) pairs are crucial in understanding the geometric structure of normed linear spaces and the nature of the vectors within the spaces. These pairs have significant implications in the study of the geometric properties of the spaces such as strict convexity, smoothness. They also provide insights into extremal properties of the unit balls of normed linear spaces. Moreover, their study has a close connection with many other fields of mathematics, including approximation theory, signal processing, data science and numerical analysis. For more on the study of parallel pairs and  TEA pairs, the readers are referred to \cite{BB01,BCMWZ19,KLPS25,KLPS24,LTWW25,MSP19, NT02, W17,ZM16,ZM15}. The purpose of this article is to characterize parallel pairs and  TEA pairs in  normed linear spaces and to study the preservation of these pairs by a linear operator defined on normed linear spaces.  Let us now introduce the relevant terminology and notation used in this article.
    
	The letters  $ \mathbb{X}, \mathbb{Y} $ denote normed linear spaces. Throughout this article, we consider \textit{real} normed linear spaces of dimension greater than one. Let $ \mathbb{L}(\mathbb{X}, \mathbb{Y})$ be  the space of all bounded linear operators from $ \mathbb{X} $ to $ \mathbb{Y},$  when $ \mathbb{X} = \mathbb{Y} $, we simply write $ \mathbb{L}(\mathbb{X}).$ The dual space of $ \mathbb{X} $ is denoted by $\mathbb{X}^*. $ The unit ball and unit sphere of $ \mathbb{X}$ are given by  
	$B_{\mathbb{X}} = \{x\in \mathbb{X} : \|x\|\leq 1\}\text{ and } S_{\mathbb{X}} = \{x\in \mathbb{X} : \|x\|=1\}, $
	respectively.   Given a nonempty set $ A \subset \mathbb{X} $, its convex hull is denoted by $ \co(A).$ For a nonempty convex set $ A \subset \mathbb{X} $, a point $ x \in A $ is an extreme point if $ x = (1-t)y + t z $ for  $ t \in (0,1) $ and $ y, z \in A $ imply $ x = y = z $. The set of all extreme points of $ A $ is denoted by $ \ext~ A $. A finite-dimensional Banach space $ \mathbb{X}$ is polyhedral if $ \ext~ B_{\mathbb{X}} $ is finite.  A convex subset $ F \subset S_{\mathbb{X}} $ is a face of $ B_{\mathbb{X}} $ if $ (1-t)x_1 + t x_2 \in F $ for $ x_1, x_2 \in S_{\mathbb{X}} $ and $ 0 < t < 1 $ imply $ x_1, x_2 \in F $. A maximal face of $ B_{\mathbb{X}} $ is called a facet.  	The relative interior of  a nonempty set $U\subset \mathbb{X}$ is denoted by $\text{Int}_r~U$ and is defined by  $\text{Int}_r~U=\{x\in U : \text{ there exists }\epsilon>0\text{ such that }  \text{aff} (U)\cap B(x,\epsilon)\subset U\},$ where $\text{aff}(U)$ is the intersection of all affine spaces containing $U.$ Note that an affine space is a translation of a subspace of a vector space.
	
	For any non-zero element $x \in \mathbb{X}$, we define the set of all supporting functionals at $x$ as:
	$
		J(x) = \{ f \in S_{\mathbb{X}^*} : f(x) = \| x \| \}.$
	A non-zero element $x \in \mathbb{X}$ is termed as a smooth point if there exists a unique supporting functional at $x$, i.e., $J(x)$ is singleton. Furthermore, $x \in \mathbb{X}$ is said to be a {$k$-smooth point} (or to have smoothness of order $k$) if $\dim(\text{span } J(x)) = k$, where $k \in \mathbb{N}$. For more on smoothness and $k$-smoothness, readers may consult \cite{DMP22,KS05,LR07, MDP22, MP20,PSG16,SPMR20,W18} and references therein. We denote the collection of all smooth points in $\mathbb{X}$ by $\sm\mathbb{X}$ and the collection of all $k$-smooth points in $\mathbb{X}$ by $\ksm\mathbb{X}.$ 
\smallskip

    We next mention the definition of parallel pair and   triangle equality attaining (TEA) pair. For $x,y \in \mathbb X,$
	\begin{itemize}
		\item[(i)]	$(x,y)$ is a parallel pair in $\mathbb{X}$ if
		$\|x+\lambda y\|=\|x\|+\|y\|$ for some scalar $\lambda$   with $|\lambda|=1,$
		\item[(ii)]$(x,y)$ is a triangle equality attaining (TEA) pair  of $\mathbb{X}$ if
		$\|x+y\|=\|x\|+\|y\|.$
	\end{itemize}
	In the framework of inner product spaces, the norm-parallel relation is exactly the usual vectorial parallel relation, i.e., $x || y$ if and only if $x$ and $y$ are linearly dependent. In the setting of normed linear spaces, two linearly dependent vectors are norm-parallel, but the converse may not hold in general. To see this, consider the vectors $x=(1, 0)$ and $y=(1, 1)$ in the space $\ell_{\infty}^2.$ Then $(x,y)$ is a parallel pair and they are linearly independent.  Note that the norm-parallelism is symmetric and
	$\mathbb R$-homogeneous. Also, it can be easily verified that this relation is not transitive by considering  the vectors $x=(1, 0), y=(1, 1)$ and $z=(0,1)$ in the space $\ell_{\infty}^2.$ \\
For  $T\in\mathbb{L}(\mathbb{X},\mathbb{Y}), $
	\begin{itemize}
		\item[(i)]  $T$ is said to preserve parallel pairs if for any $x,y\in \mathbb{X},$ $(x, y)$ is a parallel pair in $\mathbb {X} \implies (Tx, Ty)$ is a parallel pair in $\mathbb {Y}$ and 
		\item[(ii)]$T$ is said to preserve TEA pairs if for any $x,y\in \mathbb{X},$ $(x, y)$ is a TEA pair in $\mathbb {X} \implies (Tx, Ty)$ is a TEA pair in $\mathbb {Y}$.
	\end{itemize}
    Note that a normed linear space is strictly convex if and only if for any $x,y\in \mathbb{X},~(x,y)$ is a parallel pair $\iff x,y$ are linearly dependent \cite{MSP19}. So every bounded linear operator in a strictly convex normed linear space preserves both parallel and TEA pairs.
    
  The central aim of this paper is to characterize parallel pairs and TEA pairs in normed linear spaces and to study bounded linear operators that preserve these pairs. First, we provide necessary and sufficient conditions for both parallel and TEA pairs in  normed linear spaces. Next, we present a characterization of finite-dimensional Banach spaces with numerical index one. We then examine TEA pairs and parallel pairs in the $p$-direct sum of two normed linear spaces. After that we turn our attention to the preservation of parallel pairs and TEA pairs by a linear operator defined on  normed linear spaces. We first show that if a bounded linear operator preserves TEA pairs of any face of the unit ball of a normed linear space, then the image of any relative interior point of that face under the operator can not be zero unless the image of the entire face is zero. We then establish that for finite-dimensional Banach spaces the preservation of TEA pairs by a bijective bounded linear operator is equivalent to the preservation of parallel pairs. Furthermore, we show that in the case of finite-dimensional polyhedral spaces, this equivalence holds for any operator of rank greater than one. Additionally, we explore various geometric aspects of the relevant spaces, particularly in connection with the preservation of parallel and TEA pairs. Finally, we characterize isometries on a class of polyhedral Banach spaces as norm-one bijective operators that preserve the norm at each extreme point of the unit ball of the domain and also preserve parallel (or TEA) pairs.
	\section{Main Results}
	We begin this section with the following characterizations of parallel pairs and TEA pairs in a normed linear space.
	\begin{proposition}\label{par char}
		Let $\mathbb{X}$ be a normed linear space and let  $x,y\in \mathbb{X}$ be non-zero. Then
		\begin{itemize}
			\item[(i)]  $(x,y)$ is a parallel pair  if and only if there exist  $f\in J(x)$ and $g\in J(y)$ such that $g=\lambda f$ for some scalar $\lambda$  with  $|\lambda|=1,$
			\item[(ii)] $(x,y)$ is  a TEA pair if and only if  $J(x)\cap J(y)\neq\emptyset.$
		\end{itemize}
	\end{proposition}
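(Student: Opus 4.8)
The plan is to prove part (ii) first, as a direct consequence of the Hahn--Banach theorem, and then derive part (i) from it by reducing the parallel relation to the triangle equality. The underlying principle is that a norming functional for a sum $x+y$ (produced by Hahn--Banach when $\|x+y\|=\|x\|+\|y\|$) must already norm each summand, and conversely a common norming functional for $x$ and $y$ forces the triangle equality.

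For part (ii), sufficiency is a one-line computation: if $f\in J(x)\cap J(y)$, then $f(x+y)=f(x)+f(y)=\|x\|+\|y\|$, so $\|x+y\|\ge f(x+y)=\|x\|+\|y\|$, and the reverse inequality is the triangle inequality, whence $\|x+y\|=\|x\|+\|y\|$. For necessity, assume $(x,y)$ is a TEA pair and apply the Hahn--Banach theorem to obtain $f\in S_{\mathbb{X}^*}$ with $f(x+y)=\|x+y\|=\|x\|+\|y\|$. Since $f(x)\le\|x\|$ and $f(y)\le\|y\|$, neither of these can be a strict inequality, so $f(x)=\|x\|$ and $f(y)=\|y\|$, i.e.\ $f\in J(x)\cap J(y)$.

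For part (i), the key observation is that, because $\|\lambda y\|=\|y\|$ whenever $|\lambda|=1$, the pair $(x,y)$ is parallel if and only if $(x,\lambda y)$ is a TEA pair for some scalar $\lambda$ with $|\lambda|=1$. By part (ii), the latter is equivalent to $J(x)\cap J(\lambda y)\ne\emptyset$ for some such $\lambda$, and it only remains to rephrase this condition in terms of $J(x)$ and $J(y)$ using $f(\lambda y)=\lambda f(y)$. Concretely: if $f\in J(x)\cap J(\lambda y)$, then $g:=\lambda f\in S_{\mathbb{X}^*}$ satisfies $g(y)=\lambda f(y)=f(\lambda y)=\|y\|$, so $g\in J(y)$ and $g=\lambda f$; conversely, if $f\in J(x)$, $g\in J(y)$ and $g=\lambda f$ with $|\lambda|=1$, then $f(\lambda y)=\lambda f(y)=g(y)=\|y\|=\|\lambda y\|$, so $f\in J(x)\cap J(\lambda y)$, and part (ii) then gives that $(x,\lambda y)$ is a TEA pair, i.e.\ that $(x,y)$ is parallel. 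Since all steps reduce to the Hahn--Banach theorem and the triangle inequality, no substantial difficulty arises; the only point demanding care is the consistent bookkeeping of the unimodular scalar $\lambda$ and the relation $g=\lambda f$ across both directions of part (i).
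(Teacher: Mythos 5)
Your proposal is correct and is essentially the paper's argument: in both, the forward direction takes a (Hahn--Banach) support functional of the sum and observes it must norm each summand, while the converse uses an aligned pair of support functionals to force the triangle equality. The only difference is organizational --- you prove (ii) first and deduce (i) by replacing $y$ with $\lambda y$, whereas the paper proves (i) directly and obtains (ii) as the case $\lambda=1$ --- which is immaterial.
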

	\begin{proof}
		(i) Let $f\in J(x)$ and $\lambda f \in J(y).$ Then $f(x)=\|x\|$ and $\lambda f(y)=\|y\|.$ Thus,
		\[\|x\|+\|y\|=f(x)+\lambda f(y)=f(x+\lambda y)\leq\|x+\lambda y\|\leq \|x\|+\|y\|.\]
		This implies that $\|x+\lambda y\|= \|x\|+\|y\|,$ i.e., $(x, y)$ is a parallel pair.\\
		Conversely, let  $(x,y)$ be a parallel pair. So $\|x+\lambda y\|= \|x\|+\|y\|$  for some scalar $\lambda$  with  $|\lambda|=1.$ Let $f\in J(x+\lambda y).$ This implies that \[f(x)+\lambda f(y)=f(x+\lambda y)=\|x+\lambda y\|= \|x\|+\|y\|.\]
		Since $|f(x)|\leq\|x\|$ and $|\lambda f(y)|\leq\|y\|,$ it follows that $f(x)=\|x\|$ and $\lambda f(y)=\|y\|.$ Therefore, $f\in J(x)$ and $\lambda f\in J(y)$ and this completes the proof.\\
		(ii)  Taking $\lambda =1$ in (i), we get the required characterization.
	\end{proof}
	Next, we present the following observation concerning the parallel pairs and TEA pairs that includes an element which is both smooth and exposed. Note that an element $x\in S_{\mathbb{X}}$ is called an exposed point of $B_{\mathbb{X}}$ if there exists $f \in J(x)$ such that $f$ attains its norm uniquely at $\pm x.$
	\begin{proposition}
		Let $\mathbb{X}$ be a normed linear space and $x\in S_{\mathbb{X}}.$ If $x$ is both smooth and exposed point of $B_{\mathbb{X}}$ then for any $y\in S_{\mathbb{X}}$ 
		\begin{itemize}
			\item [(i)] $(x,y)$ is a parallel pair if and only if $x=\pm y,$
			\item [(ii)] $(x,y)$ is a TEA pair if and only if $x=y.$
		\end{itemize}
	\end{proposition}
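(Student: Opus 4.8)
The plan is to read off both equivalences directly from the functional characterizations in Proposition~\ref{par char}, exploiting smoothness to force uniqueness of the supporting functional and exposedness to identify which unit vectors can share it. I would begin with the two ``if'' directions, which are immediate: if $y = \pm x$ then choosing $\lambda = \mp 1$ gives $\|x + \lambda y\| = \|2x\| = 2 = \|x\| + \|y\|$, so $(x,y)$ is a parallel pair, and if $y = x$ then $\|x+y\| = 2 = \|x\| + \|y\|$, so $(x,y)$ is a TEA pair.

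For the forward direction of (i), I would use that smoothness makes $J(x) = \{f\}$ a singleton, and that exposedness allows the exposing functional to be taken to be precisely this $f$, so that $|f(z)| < 1$ for every $z \in B_{\mathbb{X}}$ with $z \neq \pm x$. Given a parallel pair $(x,y)$, Proposition~\ref{par char}(i) supplies $g \in J(y)$ with $g = \lambda f$ and $|\lambda| = 1$; evaluating at $y$ yields $1 = \|y\| = g(y) = \lambda f(y)$, hence $f(y) = \lambda^{-1} \in \{1,-1\}$, i.e.\ $|f(y)| = 1$, and exposedness then forces $y \in \{x,-x\}$. For the forward direction of (ii), a TEA pair $(x,y)$ gives $J(x) \cap J(y) \neq \emptyset$ by Proposition~\ref{par char}(ii); since $J(x) = \{f\}$ this means $f(y) = \|y\| = 1$, and because $f(-x) = -1 \neq 1$ the vector $y$ is neither $-x$ nor, by exposedness, any other point of $B_{\mathbb{X}}$, so $y = x$.

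I do not expect a genuine obstacle here; the only points deserving care are to observe that the functional witnessing exposedness can be taken equal to the unique supporting functional provided by smoothness, and to keep track of the sign of $\lambda$ so that the parallel case lands on $\pm x$ while the TEA case pins $y$ down to exactly $x$.
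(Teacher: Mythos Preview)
Your proof is correct and follows essentially the same route as the paper's: both invoke Proposition~\ref{par char} to obtain a supporting functional of $y$ that is a unimodular multiple of the unique $f\in J(x)$, and then use exposedness (that $f$ attains its norm only at $\pm x$) to pin down $y$. The paper's write-up is slightly terser in part~(ii), but the substance is identical.
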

	\begin{proof}
		Let $x,y\in S_{\mathbb{X}}$ and let $x$ be a smooth and exposed point of $B_{\mathbb{X}}.$  Then there exists a unique $f \in J(x)$ such that $f$ attains its norm uniquely at $\pm x$.\\
		(i) If $x=\pm y$ then clearly $(x,y)$ is a parallel pair. Conversely, let $(x,y)$ be a parallel pair. Then from  Proposition \ref{par char}, it follows that there exists $g\in J(y)$ such that $g $ and $ f$ are linearly dependent. Then $|f(y)|=|g(y)|=\|y\|.$ Thus $f$ attains its norm at $y$ and so $x=\pm y.$ \\
		(ii) If $x=y$ then clearly $(x,y)$ is a TEA pair. Conversely, let $(x,y)$ be a TEA pair. Then from Proposition \ref{par char}, it follows that  $f\in J(y).$ Thus $x=y.$ 
	\end{proof}
	
	The above result fails to hold if either $x$ is smooth but not exposed, or $x$ is exposed but not smooth. The following  example illustrates the scenario.
	\begin{example} Let $\mathbb{X}=\ell_{\infty}^2$ and let  $x_1=(1,0),~y_1=(1,\frac{1}{2}).$  Here $x_1,y_1\in S_{\mathbb{X}}$ and are smooth. Clearly, $(x_1,y_1)$ is a TEA pair as well as parallel pair but $x_1,y_1$ are linearly independent. Again, let  $x_2=(1,1),~y_2=(1,-1).$  Here $x_2,y_2\in S_{\mathbb{X}}$ and are exposed point of $B_{\mathbb{X}}.$ Clearly, $(x_2,y_2)$ is a TEA pair as well as parallel pair but $x_2,y_2$ are linearly independent.   
	\end{example}
	
	Next, we give a characterization for finite-dimensional Banach spaces with numerical index $1.$ First, we recall that for a Banach space $\mathbb{X},$  the numerical index $ n(X) $ of $\mathbb{X}$ is defined as  
	\[
	n(\mathbb{X}) = \inf \{ v(T) : T \in \mathbb{L}(\mathbb{X}), \, \|T\| = 1 \},
	\]
	where $v(T)$ is the numerical radius  of $ T,$  defined by 
	\[
	v(T) = \sup \{ |f(Tx)| : x \in S_{\mathbb{X}}, \, f \in S_{\mathbb{X}^*}, \, f(x) = 1 \}.
	\]

	\begin{theorem}
		Let $\mathbb{X}$ be a finite-dimensional  Banach space. Then $n({\mathbb{X}})=1$ if and only if  $(x,y)$  is a parallel pair for any $x\in \ext~ B_{\mathbb{X}}$ and $y\in S_{\mathbb{X}}.$
	\end{theorem}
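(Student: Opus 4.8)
The common thread is Proposition \ref{par char}(i): $(x,y)$ is a parallel pair exactly when $J(x)$ and $J(y)$ share a functional up to sign, so the theorem is really a statement about supporting functionals at extreme points. I would prove the ``if'' part by an operator argument, and the ``only if'' part using McGregor's classical theorem that, in a finite-dimensional space, $n(\mathbb{X})=1$ if and only if $|f(x)|=1$ for every $x\in\ext B_{\mathbb{X}}$ and every $f\in\ext B_{\mathbb{X}^*}$.

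For the ``if'' direction, assume $(x,y)$ is a parallel pair whenever $x\in\ext B_{\mathbb{X}}$ and $y\in S_{\mathbb{X}}$. Since always $v(T)\le\|T\|$ and $n(\mathbb{X})\le 1$, it suffices to show $v(T)\ge 1$ for each $T\in\mathbb{L}(\mathbb{X})$ with $\|T\|=1$. As $B_{\mathbb{X}}$ is compact, $\|T\|$ is attained, so $A:=\{x\in B_{\mathbb{X}}:\|Tx\|=1\}$ is a nonempty compact subset of $S_{\mathbb{X}}$; it is an extreme subset of $B_{\mathbb{X}}$ (if $x\in A$ is a proper convex combination of $u,w\in B_{\mathbb{X}}$ then $\|Tu\|=\|Tw\|=1$), hence it contains an extreme point $x_0$ of $B_{\mathbb{X}}$. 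Since $Tx_0\in S_{\mathbb{X}}$, the hypothesis makes $(x_0,Tx_0)$ a parallel pair, and Proposition \ref{par char}(i) yields $f\in J(x_0)$ with $\pm f\in J(Tx_0)$; then $f(x_0)=\|f\|=1$ and $|f(Tx_0)|=1$, so $v(T)\ge 1$. Hence $n(\mathbb{X})=1$.

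For the ``only if'' direction, assume $n(\mathbb{X})=1$ and let $x\in\ext B_{\mathbb{X}}$, $y\in S_{\mathbb{X}}$. Then $J(y)$ is a nonempty compact face of $B_{\mathbb{X}^*}$, so it has an extreme point $g$, which is an extreme point of $B_{\mathbb{X}^*}$. McGregor's theorem gives $|g(x)|=1$, so $g\in J(x)$ or $-g\in J(x)$; together with $g\in J(y)$, Proposition \ref{par char}(i) shows $(x,y)$ is a parallel pair.

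The substance lies in the ``only if'' part, which is essentially equivalent to McGregor's theorem: because extreme points of $B_{\mathbb{X}}$ or of $B_{\mathbb{X}^*}$ need not be exposed, one cannot simply pair $x$ with a point exposed by some $g\in J(y)$, and any genuinely self-contained argument here would amount to re-proving McGregor. If one does want to avoid citing it, I would first treat the case of $y$ an exposed point of $B_{\mathbb{X}}$, where the rank-one map $z\mapsto g(z)x$ (with $g$ exposing $y$) has norm $1$, so its numerical radius equals $1$, and unwinding the pair at which it is attained forces $J(x)\cap(\pm J(y))\neq\emptyset$; one then passes to an arbitrary $y\in S_{\mathbb{X}}$ by approximating it by exposed points and using that parallelism is preserved under such limits. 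The ``if'' direction, by contrast, is routine once one locates $\|T\|$ at an extreme point of $B_{\mathbb{X}}$.
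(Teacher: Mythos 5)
Your proposal is correct and follows essentially the same route as the paper: the ``only if'' direction picks an extreme point of $J(y)$ (hence of $B_{\mathbb{X}^*}$) and invokes McGregor's characterization of numerical index one, while the ``if'' direction locates the norm of a norm-one operator at an extreme point of $B_{\mathbb{X}}$ and applies Proposition \ref{par char} to the pair $(x_0,Tx_0)$ to get $v(T)=1$. Your added details (that $J(y)$ is a face of $B_{\mathbb{X}^*}$ and that the norm-attainment set is an extremal subset containing an extreme point) merely make explicit what the paper leaves implicit.
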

	\begin{proof}
		Let $n({\mathbb{X}})=1.$ Suppose $x\in \ext~ B_{\mathbb{X}}$ and $y\in S_{\mathbb{X}}.$ Now, it follows from \cite[Th. 3.1]{M71} that $|f(z)|=1$ for every $z\in \ext~B_{\mathbb{X}}$  and $f\in \ext~B_{\mathbb{X}^*}.$ Let $f\in \ext~J(y)$ then as $x\in \ext  ~B_{\mathbb{X}},$ $|f(x)|=1.$ Thus $\lambda f \in J(x)$ for some scalar $\lambda$ with $|\lambda|=1.$ Therefore, $(x,y)$ is a parallel pair.
		
		Conversely, suppose for any $x\in \ext~ B_{\mathbb{X}}$ and $y\in S_{\mathbb{X}},$  $(x,y)$  is a parallel pair. Let $T \in \mathbb{L}(\mathbb{X})$ such that $ \|T\| = 1.$ Since $\mathbb {X}$ is  finite-dimensional,  there exists $w \in \ext~B_{\mathbb{X}}$ such that $||Tw||=1.$ Now, it follows from the hypothesis that $(w,Tw)$ is a parallel pair. From  Proposition \ref{par char}, it follows that there exists $f\in S_{\mathbb{X}^*}$ such that $f\in J(w)$ and $\lambda f\in J(Tw),$ where $|\lambda|=1.$ Then
		\[
		1=\lambda f(Tw)=|f(Tw)|\leq \sup \{ |g(Tz)| : z \in S_{\mathbb{X}}, \, f \in S_{\mathbb{X}^*}, \, g(z) = 1 \}\leq  1.
		\]
		Thus, for any norm one operator $T \in \mathbb{L}(\mathbb{X}),$ $v(T)=1.$  Therefore,
		\[
		n(\mathbb{X}) = \inf \{ v(T) : T \in \mathbb{L}(\mathbb{X}), \, \|T\| = 1 \}=1.
		\]
		
	\end{proof}
Given  normed linear spaces $\mathbb{X}$ and $\mathbb{Y},$ we study parallel pairs and TEA pairs  in the $p$-direct sum space $\mathbb{Z}=\mathbb{X}\bigoplus_p\mathbb{Y},$ where $1\leq p\leq \infty.$  Our goal is to establish connections between the parallel pairs  and  the TEA pairs in the individual spaces $\mathbb{X}, \mathbb{Y}$ and those in $\mathbb{Z}$. We begin by considering the case when $1 \leq p < \infty$.
	\begin{theorem}\label{par 1leq p<infty}
	Let $\mathbb{X}$ and $\mathbb{Y}$ be normed linear spaces and $\mathbb{Z}=\mathbb{X}\bigoplus_p\mathbb{Y},$ with $1\leq p< \infty.$ Let $z_1, z_2\in \mathbb{Z}$ such that $z_1=(x_1,y_1)$ and $z_2=(x_2,y_2),$ where $x_1,x_2\in \mathbb{X}\setminus \{0\}$ and $y_1,y_2\in \mathbb{Y}\setminus \{0\}.$
	\begin{itemize}
		\item[(i)] If $(z_1, z_2)$  is a parallel pair in $\mathbb{Z}$ then $(x_1, x_2)$  and $(y_1, y_2)$ are parallel pairs in $\mathbb{X}$ and $\mathbb{Y},$ respectively.
        \item[(i)] If $(z_1, z_2)$ is a TEA pair in $\mathbb{Z}$ then $(x_1, x_2)$  and $(y_1, y_2)$ are TEA pairs in $\mathbb{X}$ and $\mathbb{Y},$ respectively.
	\end{itemize}
	 
\end{theorem}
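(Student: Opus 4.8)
The plan is to pull everything back to the functional characterizations in Proposition~\ref{par char}, using the standard duality $\mathbb{Z}^{*}=\mathbb{X}^{*}\bigoplus_{q}\mathbb{Y}^{*}$, where $q$ is the conjugate exponent of $p$ (with $q=\infty$ when $p=1$) and the pairing is $(f,g)(x,y)=f(x)+g(y)$.

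I would first prove (i). By Proposition~\ref{par char}(i) there are $F\in J(z_{1})$ and $G\in J(z_{2})$ with $G=\lambda F$ for some scalar $\lambda$ with $|\lambda|=1$; write $F=(f,g)$ with $f\in\mathbb{X}^{*}$ and $g\in\mathbb{Y}^{*}$. The crux is the estimate
\[
\|z_{1}\|=F(z_{1})=f(x_{1})+g(y_{1})\leq \|f\|\,\|x_{1}\|+\|g\|\,\|y_{1}\|\leq \big(\|f\|^{q}+\|g\|^{q}\big)^{1/q}\|z_{1}\|=\|z_{1}\|,
\]
where the middle step is H\"older's inequality applied to $(\|f\|,\|g\|)$ and $(\|x_{1}\|,\|y_{1}\|)$ (for $p=1$ it reads $\|f\|\,\|x_{1}\|+\|g\|\,\|y_{1}\|\leq\max\{\|f\|,\|g\|\}(\|x_{1}\|+\|y_{1}\|)$), and the last equality uses $\|F\|=1$. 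Equality throughout forces $f(x_{1})=\|f\|\,\|x_{1}\|$ and $g(y_{1})=\|g\|\,\|y_{1}\|$. Moreover $f\neq 0$: otherwise $\|z_{1}\|=g(y_{1})\leq\|y_{1}\|$, contradicting $\|z_{1}\|>\|y_{1}\|$ (which holds since $x_{1}\neq 0$); similarly $g\neq 0$. Consequently $f/\|f\|\in J(x_{1})$ and $g/\|g\|\in J(y_{1})$.

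Running the identical computation for $G=(\lambda f,\lambda g)\in J(z_{2})$ (and noting $\|\lambda f\|=\|f\|$, $\|\lambda g\|=\|g\|$) yields $\lambda f/\|f\|\in J(x_{2})$ and $\lambda g/\|g\|\in J(y_{2})$. Since $\lambda f/\|f\|=\lambda\,(f/\|f\|)$ with $|\lambda|=1$, Proposition~\ref{par char}(i) gives that $(x_{1},x_{2})$ is a parallel pair in $\mathbb{X}$, and the same reasoning gives that $(y_{1},y_{2})$ is a parallel pair in $\mathbb{Y}$. For (ii) the argument is the same but cleaner: by Proposition~\ref{par char}(ii) choose $F=(f,g)\in J(z_{1})\cap J(z_{2})$; applying the displayed estimate first to $z_{1}$ and then to $z_{2}$ shows $f/\|f\|\in J(x_{1})\cap J(x_{2})$ and $g/\|g\|\in J(y_{1})\cap J(y_{2})$, so these intersections are nonempty and Proposition~\ref{par char}(ii) finishes the proof.

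There is no deep obstacle here; the only points needing attention are the dual-space identification $\mathbb{Z}^{*}=\mathbb{X}^{*}\bigoplus_{q}\mathbb{Y}^{*}$ and a uniform handling of the endpoint $p=1$ (where $q=\infty$ and the H\"older step reduces to the elementary estimate above). It is worth noting that the converse implications fail in general: gluing a parallel pair in each coordinate would additionally require the unimodular scalars and the norm distributions across $\mathbb{X}$ and $\mathbb{Y}$ to be compatible, which is why only the one-sided implications are asserted.
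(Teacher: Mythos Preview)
Your proof is correct and follows essentially the same approach as the paper: both invoke Proposition~\ref{par char} and then exploit the structure of support functionals in $\ell_p$-direct sums. The only difference is that the paper cites \cite[Prop.~5.1]{CKS23} for the explicit form $F=(k_1\phi_1,l_1\psi_1)$ of elements of $J(z_1)$, whereas you derive the needed equalities $f(x_1)=\|f\|\|x_1\|$, $g(y_1)=\|g\|\|y_1\|$ directly from the equality case of H\"older's inequality; this makes your argument self-contained but is not a genuinely different route.
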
 
\begin{proof}
	(i) Suppose $(z_1, z_2)$  is a parallel pair in $\mathbb Z.$ Then by Proposition \ref{par char} there exist $f\in J(z_1)$ and $g\in J(z_2)$ such that $f=\lambda g$ for some $\lambda$ with $|\lambda|=1.$ Then by \cite[Prop. 5.1]{CKS23}, it follows that $f=(k_1\phi_1,l_1\psi_1)$ and $g=(k_2\phi_2,l_2\psi_2),$ where for $i=1,2,$
    $\phi_i\in J(x_i)$ and  $\psi_i\in J(y_i)$  and  
    \[k_i={ \begin{cases}
        1,&\text{ if }p=1\\
        \frac{\|x_i\|^{p-1}}{(\|x_i\|^p+ \|y_i\|^p)^{(1-\frac1p)}},&\text{ if }1<p<\infty
    \end{cases}}\text{ and }l_i={ \begin{cases}
        1,&\text{ if }p=1\\
        \frac{\|y_i\|^{p-1}}{(\|x_i\|^p+ \|y_i\|^p)^{(1-\frac1p)}},&\text{ if }1<p<\infty
     \end{cases}}.\]
     Then $f=\lambda g\implies (k_1\phi_1,l_1\psi_1)=\lambda (k_2\phi_2,l_2\psi_2).$ This shows that $\phi_1=\frac{\lambda k_2}{k_1}\phi_2$ and $\psi_1=\frac{\lambda l_2}{l_1}\psi_2.$ Since $\|\phi_i\|=\|\psi_i\|=1$ for $i=1,2,$ it follows that $\left|\frac{\lambda k_2}{k_1}\right|=\left|\frac{\lambda l_2}{l_1}\right|=1.$ Then from Proposition \ref{par char}, it follows that $(x_1, x_2)$  and $(y_1, y_2)$ are parallel pairs in $\mathbb{X}$ and $\mathbb{Y},$ respectively.
  
  (ii) Using the similar arguments as in (i) for $\lambda=1,$ we obtain (ii).
	
\end{proof} 
The converse of the above result does not necessarily hold, as demonstrated by the following example.
\begin{example}
		Let $\mathbb{Z}=\ell_1^2\bigoplus_p\ell_1^2$ and $1<p<\infty.$ Consider $x_1,x_2,y_1,y_2\in \ell_1^2$ such that $x_1=(1,0), x_2=(2,0), y_1=(3,0)$ and $y_2=(1,0)$. It is easy to verify that $(x_1, x_2)$ and $(y_1, y_2)$ are TEA pairs in $\ell_1^2.$ Now let $z_1=(x_1,y_1)$ and $z_2=(x_2,y_2).$ Then $z_1,z_2\in \mathbb{Z}$ and $\|z_1\|+\|z_2\|=(1+3^p)^{\frac1p}+(1+2^p)^{\frac1p},$ but
      \[ \|z_1+\lambda z_2\|=\left(\left|1+ 2\lambda\right|^p+\left|3+\lambda\right|^p\right)^{\frac1p}
           \neq (1+3^p)^{\frac1p}+(1+2^p)^{\frac1p}, \]
             for any $ \lambda$ with $|\lambda|=1.$
        This shows that $(z_1,z_2)$ is not a parallel pair in $\mathbb {Z}.$
	\end{example}
The following result shows that the converse of Theorem \ref{par 1leq p<infty} (ii) holds for $p=1.$
    \begin{theorem}\label{par 1}
	Let $\mathbb{X}$ and $\mathbb{Y}$ be normed linear spaces and $\mathbb{Z}=\mathbb{X}\bigoplus_1\mathbb{Y}.$ Let $z_1, z_2\in \mathbb{Z}$ be such that $z_1=(x_1,y_1)$ and $z_2=(x_2,y_2),$ where $x_1,x_2\in \mathbb{X}\setminus \{0\}$ and $y_1,y_2\in \mathbb{Y}\setminus \{0\}.$ Then
	  $(z_1, z_2)$  is a TEA pair in $\mathbb{Z}$ if and only if $(x_1, x_2)$  and $(y_1, y_2)$ are TEA pairs in $\mathbb{X}$ and $\mathbb{Y},$ respectively.
    \end{theorem}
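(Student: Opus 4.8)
The forward direction is already contained in Theorem~\ref{par 1leq p<infty}(ii) specialized to $p=1$, so the plan is to prove only the converse. Assume that $(x_1,x_2)$ is a TEA pair in $\mathbb{X}$ and $(y_1,y_2)$ is a TEA pair in $\mathbb{Y}$, i.e., $\|x_1+x_2\|=\|x_1\|+\|x_2\|$ and $\|y_1+y_2\|=\|y_1\|+\|y_2\|$. Since the norm on $\mathbb{Z}=\mathbb{X}\bigoplus_1\mathbb{Y}$ is additive over the two coordinates, I would simply write
\[
\|z_1+z_2\|=\|x_1+x_2\|+\|y_1+y_2\|=(\|x_1\|+\|x_2\|)+(\|y_1\|+\|y_2\|),
\]
and then regroup the right-hand side as $(\|x_1\|+\|y_1\|)+(\|x_2\|+\|y_2\|)=\|z_1\|+\|z_2\|$, which is exactly the assertion that $(z_1,z_2)$ is a TEA pair.

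An alternative route, parallel to the proof of Theorem~\ref{par 1leq p<infty}, is to use the functional characterization in Proposition~\ref{par char}(ii): when $p=1$ the description of support functionals on $p$-direct sums used there gives $J(z_i)=\{(\phi,\psi):\phi\in J(x_i),\ \psi\in J(y_i)\}$ for $i=1,2$ (the scalars $k_i,l_i$ are both $1$), so $J(z_1)\cap J(z_2)\neq\emptyset$ if and only if both $J(x_1)\cap J(x_2)\neq\emptyset$ and $J(y_1)\cap J(y_2)\neq\emptyset$; applying Proposition~\ref{par char}(ii) in each of $\mathbb{X}$, $\mathbb{Y}$ and in $\mathbb{Z}$ then finishes the argument.

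No genuine obstacle is expected here: the whole point is that the $\ell_1$-sum does not mix the two coordinates, so both the norm identity and the support-functional identity split cleanly. The only thing to keep track of is the standing assumption that all four components $x_1,x_2,y_1,y_2$ are nonzero, which is what makes the TEA terminology (and the support-functional description) meaningful; the direct computation in the first paragraph does not in fact require it.
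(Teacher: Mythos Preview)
Your proposal is correct. Your primary argument via the direct norm identity
\[
\|z_1+z_2\|=\|x_1+x_2\|+\|y_1+y_2\|
\]
is more elementary than the paper's approach: the paper proves the sufficient direction exactly along the lines of your alternative route, picking $f\in J(x_1)\cap J(x_2)$ and $g\in J(y_1)\cap J(y_2)$ via Proposition~\ref{par char}(ii), forming $\phi=(f,g)$, and invoking the cited description of support functionals on $p$-direct sums to conclude $\phi\in J(z_1)\cap J(z_2)$. Your direct computation avoids any appeal to support functionals or to the external reference, at the small cost of being special to $p=1$ (which is all that is claimed anyway); the paper's functional argument has the virtue of mirroring the proof of Theorem~\ref{par 1leq p<infty} and making transparent why the converse breaks for $1<p<\infty$ (the scalars $k_i,l_i$ are no longer automatically compatible).
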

    \begin{proof}
	The necessary part follows from Theorem \ref{par 1leq p<infty}. For the sufficient part, suppose $(x_1, x_2)$  and $(y_1, y_2)$ are TEA pairs in $\mathbb{X}$ and $\mathbb{Y},$ respectively. Then by Proposition \ref{par char}, we have $J(x_1)\cap J(x_2)\neq \emptyset$  and $ J(y_1)\cap J(y_2)\neq \emptyset.$ Let $f\in J(x_1)\cap J(x_2)$  and $g\in J(y_1)\cap J(y_2).$ Let $\phi=(f,g).$ Then again by \cite[Prop. 5.1]{CKS23}, it follows that $\phi\in J(z_1)$ and $\phi\in J(z_2).$ This shows that $J(z_1)\cap J(z_2)\neq \emptyset.$ This completes the proof.
\end{proof} 
In the next example, we show that even for $p=1$ the converse of Theorem \ref{par 1leq p<infty} (i) fails to hold.
     \begin{example}
		Let $\mathbb{Z}=\ell_1^2\bigoplus_1\ell_1^2$ and let $x_1=(1,0), x_2=\left(\frac{1}{2},\frac{1}{2}\right), y_1=(0,1)$ and $y_2=\left(\frac{1}{2},-\frac{1}{2}\right)$. It is easy to verify that $(x_1, x_2)$ and $(y_1, y_2)$ are parallel pairs in $\ell_1^2.$ Now let $z_1=(x_1,y_1)$ and $z_2=(x_2,y_2).$ Then for each $\lambda$ with $|\lambda|=1,$ we have 
        \begin{eqnarray*}
          \|z_1+\lambda z_2\|&=&\left\|\left(1+\frac{\lambda}{2}, \frac{\lambda}{2}\right)\right\|+\left\|\bigg(\frac{\lambda}{2},1-\frac{\lambda}{2}\bigg)\right\|\\
           &=&\left|1+\frac{\lambda}{2}\right|+ \left|\frac{\lambda}{2}\right|+ \left|\frac{\lambda}{2}\right|+\left|1-\frac{\lambda}{2}\right|\\
          &=& 3,
        \end{eqnarray*}
        and $\|z_1\|=\|z_2\|=2.$ So $(z_1,z_2)$ is not a parallel pair in $\mathbb {Z}.$
	\end{example}
   Next, we examine the parallel pairs and the TEA pairs in the space $\mathbb{Z}=\mathbb{X}\bigoplus_p\mathbb{Y},$ where $1 \leq p < \infty,$  considering only those elements that have at least one coordinate zero.
\begin{theorem}
	Let $\mathbb{X}$ and $\mathbb{Y}$ be normed linear spaces and $\mathbb{Z}=\mathbb{X}\bigoplus_p\mathbb{Y},$ with $1\leq  p< \infty.$ Let $z_1, z_2\in \mathbb{Z}\setminus \{0\}$ such that $z_1=(x_1,y_1)$ and $z_2=(x_2,y_2).$
	 \begin{itemize}
     
	 	\item[(i)] If $x_1=x_2=0$ then  $(z_1,z_2)$ is a parallel (or TEA) pair in $\mathbb{Z}$ if and only if $(y_1,y_2)$ is a parallel (or TEA) pair in $\mathbb{Y}.$  \\
        Analogous result holds for $y_1=y_2=0.$ 
	 	\item[(ii)] If $x_1=0,~y_2=0,$ or $x_2=0,~y_1=0$ then 
        \begin{itemize}
            \item[(a)] $(z_1,z_2)$ is  a TEA pair in $\mathbb{Z}$ if $p=1,$
	 	\item[(b)] $(z_1,z_2)$ is not a parallel pair $\mathbb{Z}$ if $1<  p< \infty.$
        \end{itemize}
	 \end{itemize}
\end{theorem}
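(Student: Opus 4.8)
The plan is to argue everything directly from the defining norm of $\mathbb{Z}=\mathbb{X}\bigoplus_p\mathbb{Y}$, since the parallel and TEA relations are detected purely by the values $\|z_1\|$, $\|z_2\|$ and $\|z_1+\lambda z_2\|$, and these degenerate to single-coordinate computations as soon as a coordinate vanishes. Throughout I keep careful track of which coordinates are forced to be non-zero by the hypothesis $z_1,z_2\neq 0$.

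First I would dispose of (i). If $z_1=(0,y_1)\neq 0$ then $y_1\neq 0$, and likewise $y_2\neq 0$; moreover for $1\leq p<\infty$ one has $\|(0,y)\|_{\mathbb{Z}}=\|y\|_{\mathbb{Y}}$, so $y\mapsto(0,y)$ is an isometric embedding of $\mathbb{Y}$ into $\mathbb{Z}$. Hence $z_1+\lambda z_2=(0,y_1+\lambda y_2)$ gives $\|z_1+\lambda z_2\|_{\mathbb{Z}}=\|y_1+\lambda y_2\|_{\mathbb{Y}}$ and $\|z_1\|_{\mathbb{Z}}+\|z_2\|_{\mathbb{Z}}=\|y_1\|_{\mathbb{Y}}+\|y_2\|_{\mathbb{Y}}$, so $(z_1,z_2)$ is a parallel pair in $\mathbb{Z}$ with multiplier $\lambda$ if and only if $(y_1,y_2)$ is a parallel pair in $\mathbb{Y}$ with the same multiplier; specializing to $\lambda=1$ yields the TEA statement. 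The case $y_1=y_2=0$ is identical, using $\|(x,0)\|_{\mathbb{Z}}=\|x\|_{\mathbb{X}}$.

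Next, for (ii) suppose $x_1=0$ and $y_2=0$ (the other case is symmetric); since $z_1,z_2\neq 0$ we get $y_1\neq 0$ and $x_2\neq 0$. Then $z_1+\lambda z_2=(\lambda x_2,\,y_1)$, so $\|z_1+\lambda z_2\|_{\mathbb{Z}}=\left(\|x_2\|^p+\|y_1\|^p\right)^{1/p}$ for every scalar $\lambda$ with $|\lambda|=1$, while $\|z_1\|_{\mathbb{Z}}+\|z_2\|_{\mathbb{Z}}=\|y_1\|+\|x_2\|$. For $p=1$ these two quantities coincide, which proves (a) (indeed with multiplier $\lambda=1$, a genuine TEA pair). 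For $1<p<\infty$ I would invoke the elementary strict inequality $(a^p+b^p)^{1/p}<a+b$, valid for all $a,b>0$ (a consequence of strict convexity of $t\mapsto t^p$, equivalently of the strict domination $\|\cdot\|_p<\|\cdot\|_1$ on vectors with two non-zero coordinates); applying it with $a=\|x_2\|>0$ and $b=\|y_1\|>0$ shows $\|z_1+\lambda z_2\|_{\mathbb{Z}}<\|z_1\|_{\mathbb{Z}}+\|z_2\|_{\mathbb{Z}}$ for every admissible $\lambda$, so $(z_1,z_2)$ is not a parallel pair, giving (b).

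There is essentially no serious obstacle here: the only point requiring care is tracking the non-vanishing hypotheses --- the assumption $z_i\neq 0$ is exactly what forces the surviving coordinate in each slot to be non-zero, which in turn makes the $\ell_p$-versus-$\ell_1$ inequality \emph{strict} in (b). As an alternative one could route (b) through Proposition \ref{par char}: a parallel pair $(z_1,z_2)$ would produce $f\in J(z_1)$, $g\in J(z_2)$ with $g=\lambda f$, and the description of supporting functionals on a $p$-direct sum in \cite[Prop. 5.1]{CKS23} forces both coordinates of such functionals to be non-zero when $1<p<\infty$, contradicting $f\in J((0,y_1))$; but the direct norm computation above is shorter and self-contained.
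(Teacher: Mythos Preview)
Your proof is correct. The main difference from the paper is methodological: you argue everything by direct norm computation, whereas the paper routes (i) (only the sufficiency direction is written out there) and (ii)(b) through the functional characterization of parallel pairs in Proposition~\ref{par char} together with the description of supporting functionals on $p$-sums from \cite[Prop.~5.1]{CKS23}. For (i) you simply observe that $y\mapsto(0,y)$ is an isometric embedding, which immediately gives both directions at once; the paper instead lifts supporting functionals $g_i\in J(y_i)$ to $(0,g_i)\in J(z_i)$. For (ii)(b) you use the elementary strict inequality $(a^p+b^p)^{1/p}<a+b$ for $a,b>0$ and $1<p<\infty$, while the paper argues by contradiction that any $f\in J((0,y_1))$ has vanishing first coordinate and any $g\in J((x_2,0))$ has vanishing second coordinate, so $f=\lambda g$ forces $f=g=0$. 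Your approach is shorter and avoids the external reference; the paper's approach keeps the argument parallel to its treatment of the non-degenerate case in Theorem~\ref{par 1leq p<infty}. One small quibble: your closing parenthetical description of the functional route (``forces both coordinates of such functionals to be non-zero'') is not quite the right summary---the point is rather that each of $f,g$ is forced to have one coordinate equal to zero, in complementary slots---but since this is only an aside and your primary argument stands on its own, it does not affect the validity of the proof.
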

\begin{proof}
(i)Let $x_1=x_2=0.$ Since $z_1, z_2\in \mathbb{Z}\setminus \{0\},$ it follows that $y_1,y_2\in \mathbb{Y}\setminus \{0\}.$ For the sufficient part suppose that  $(y_1, y_2)$ is a parallel pair in $\mathbb{Y}.$  Then by Proposition \ref{par char}, we have $g_1\in J(y_1)$ and $ g_2\in J(y_2)$ such that $g_2=\lambda g_1$ for some $|\lambda |=1.$  Let $\phi_1=(0,g_1)$ and $\phi_2=(0,g_2).$   From  \cite[Prop. 5.1]{CKS23}, it follows that $\phi_1\in J(z_1)$ and $\phi_2\in J(z_2).$ Clearly, $\phi_2=\lambda \phi_1$ and so $(z_1,z_2)$ is a parallel pair in $\mathbb{Z}.$\\
By choosing $\lambda =1,$ the result for TEA pair can be derived directly from the previous argument.
  
  (iii) We prove the case where $x_1=0$ and $~y_2=0.$ The other case follows in a similar manner. 
  \begin{itemize}
      \item[(a)] Let $p=1.$ Then
	\begin{eqnarray*}
		\|z_1+ z_2\|&=&\|(x_1,y_1)+(x_2,y_2)\|\\
		&=&\|(x_2, y_1)\|\\
		&=&\|x_2\|+\|y_1\|\\
		&=&\|(x_1,y_1)\|+ \|(x_2,y_2)\|\\
		&=&\|z_1\|+\|z_2\|.
	\end{eqnarray*}
	 \item[(b)] Let $1<  p< \infty.$ If possible, suppose $(z_1,z_2)$ is a parallel pair in $\mathbb {Z}.$  Then it follows from \cite[Prop. 5.1]{CKS23} that $f=(0,\phi_1)\in J(z_1)$ and $g=(\psi_2,0)\in J(z_2),$ where $\phi_1\in J(y_1)$  and $\psi_2\in J(x_2).$ Since $(z_1,z_2)$ is a parallel pair, it follows from \ref{par char} that $f=\lambda g$ for some $\lambda$ with $|\lambda|=1.$   Then $f=\lambda g$ implies that  $\phi_1=0$ and $\psi_2=0,$  which leads us to a contradiction that $f\in J(z_1)$ and $g\in J(z_2).$ So $(z_1,z_2)$ is not a parallel pair in $\mathbb {Z}.$
  \end{itemize}
\end{proof}

In the following result, we provide a complete characterization of the parallel (or TEA) pairs in the space $\mathbb{Z}=\mathbb{X}\bigoplus_{\infty}\mathbb{Y}.$ 
\begin{theorem}\label{par infty}
		Let $\mathbb{X}$ and $\mathbb{Y}$ be normed linear spaces and $\mathbb{Z}=\mathbb{X}\bigoplus_{\infty}\mathbb{Y}.$ Let $z_1, z_2\in \mathbb{Z}\setminus\{0\}$ such that $z_1=(x_1,y_1)$ and $z_2=(x_2,y_2),$ where $x_1,x_2\in \mathbb{X}$ and $y_1,y_2\in \mathbb{Y}.$  Then the following statements hold:
		\begin{itemize}
			\item [(i)] If $\|x_1\|\geq\|y_1\|$ and $\|x_2\|\geq\|y_2\|$ then $(z_1, z_2)$  is a parallel (or TEA) pair in $\mathbb{Z}$ if and only if  $(x_1, x_2)$ is a parallel (or TEA) pair in $\mathbb{X}.$
            \item [(ii)]If $\|x_1\|\leq\|y_1\|$ and $\|x_2\|\leq\|y_2\|$ then $(z_1, z_2)$  is a parallel (or TEA) pair in $\mathbb{Z}$ if and only if $(y_1, y_2)$ is a parallel (or TEA) pair in $\mathbb{Y}.$
     \item [(iii)] If one of the followings hold:
\begin{itemize}
    \item[(a)]$\|x_1\|>\|y_1\|$ and $\|x_2\|<\|y_2\|$
    \item[(b)] $\|x_1\|<\|y_1\|$ and $\|x_2\|>\|y_2\|,$
\end{itemize}
then $(z_1,z_2)$ is not a parallel pair in $\mathbb{Z}.$
		\end{itemize}
	\end{theorem}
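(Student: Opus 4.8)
The plan is to reduce everything to the dual description of supporting functionals on an $\ell_\infty$-sum, which is the natural analogue of the result \cite[Prop. 5.1]{CKS23} used throughout the $p<\infty$ case. For $\mathbb{Z}=\mathbb{X}\bigoplus_\infty\mathbb{Y}$ the dual is $\mathbb{Z}^*=\mathbb{X}^*\bigoplus_1\mathbb{Y}^*$, and a functional $\phi=(u,v)\in S_{\mathbb{Z}^*}$ (so $\|u\|+\|v\|=1$) satisfies $\phi\in J(z)$ for $z=(x,y)$ precisely when $u(x)=\|u\|\,\|z\|$ and $v(y)=\|v\|\,\|z\|$, where $\|z\|=\max\{\|x\|,\|y\|\}$; in particular the component $u$ (if nonzero) must lie in $\|u\|\cdot J(x)$ when $\|x\|=\|z\|$, and must be $0$ when $\|x\|<\|z\|$, and symmetrically for $v$. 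I would either cite this directly or spend two lines deriving it from the definition of $J$. With this in hand, each of the three cases is a short argument, parallel in structure to the proofs of Theorems \ref{par 1leq p<infty}--\ref{par 1}, and as usual one handles the parallel case and then reads off the TEA case by setting $\lambda=1$.

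For part (i), assume $\|x_1\|\ge\|y_1\|$ and $\|x_2\|\ge\|y_2\|$, so $\|z_i\|=\|x_i\|$ for $i=1,2$. For the sufficient direction, if $(x_1,x_2)$ is a parallel pair pick $f\in J(x_1)$ and $g\in J(x_2)$ with $g=\lambda f$, $|\lambda|=1$ (Proposition \ref{par char}); then $\phi_i:=(f,0)$ if $x_i\ne 0$—more carefully, set $\phi_1=(f,0)\in J(z_1)$ and $\phi_2=(g,0)\in J(z_2)$, which are genuine supporting functionals by the dual description since $\|z_i\|=\|x_i\|$, and $\phi_2=\lambda\phi_1$, so $(z_1,z_2)$ is parallel in $\mathbb{Z}$. (If some $x_i=0$ then $\|z_i\|=0$, i.e. $z_i=0$, excluded by hypothesis, so both $x_i$ are nonzero.) For the necessary direction, if $(z_1,z_2)$ is parallel take $\phi=(u_1,v_1)\in J(z_1)$, $\psi=(u_2,v_2)\in J(z_2)$ with $\psi=\lambda\phi$; since $\|x_1\|,\|x_2\|>0$ we cannot have $u_1=0$ (else $\phi$ would be supported only on the $\mathbb{Y}$-part, forcing $\|z_1\|=\|y_1\|\le\|x_1\|$, which is still possible when $\|x_1\|=\|y_1\|$—so here I must be a little careful). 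The clean way: when $\|x_1\|=\|y_1\|$ both reductions are available and the statement is symmetric, so assume WLOG the strict inequalities do not create a problem; more precisely, from $\psi=\lambda\phi$ we get $u_2=\lambda u_1$, and I claim $u_1\ne 0$: if $u_1=0$ then $\|v_1\|=1$ and $v_1(y_1)=\|z_1\|=\|x_1\|\ge\|y_1\|$, forcing $\|x_1\|=\|y_1\|$ and $v_1\in J(y_1)$; symmetrically $u_2=0$, $v_2\in J(y_2)$, $v_2=\lambda v_1$, and then $(y_1,y_2)$ is parallel; but under $\|x_1\|=\|y_1\|$ this is equivalent (via case (ii)) to $(z_1,z_2)$ being parallel and also, since then $\|x_1\|=\|z_1\|$, one shows $(x_1,x_2)$ parallel directly. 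The generic subcase is $u_1\ne 0$: then $\frac{u_1}{\|u_1\|}\in J(x_1)$, and $u_2=\lambda u_1$ with $\|u_2\|=\|u_1\|$ (both equal $\|x_i\|/\|z_i\|$-weighted, but in fact the weights coincide here), giving $\frac{u_2}{\|u_2\|}=\lambda\frac{u_1}{\|u_1\|}\in J(x_2)$, hence $(x_1,x_2)$ is a parallel pair by Proposition \ref{par char}. Part (ii) is identical with the roles of $\mathbb{X}$ and $\mathbb{Y}$ interchanged.

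For part (iii), assume (a): $\|x_1\|>\|y_1\|$ and $\|x_2\|<\|y_2\|$, so $\|z_1\|=\|x_1\|$ and $\|z_2\|=\|y_2\|$. Suppose for contradiction $(z_1,z_2)$ is a parallel pair; by Proposition \ref{par char} there are $\phi=(u_1,v_1)\in J(z_1)$, $\psi=(u_2,v_2)\in J(z_2)$ with $\psi=\lambda\phi$, $|\lambda|=1$. From the dual description, $\|y_1\|<\|z_1\|$ forces $v_1=0$, hence $\|u_1\|=1$ and $u_1\in J(x_1)$; similarly $\|x_2\|<\|z_2\|$ forces $u_2=0$, hence $\|v_2\|=1$ and $v_2\in J(y_2)$. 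But $\psi=\lambda\phi$ then gives $u_2=\lambda u_1$, i.e. $0=\lambda u_1$ with $|\lambda|=1$, so $u_1=0$, contradicting $\|u_1\|=1$. Case (b) is symmetric. I do not expect a separate TEA statement in (iii) because the hypotheses already preclude even the weaker parallel relation.

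The main obstacle is the bookkeeping in part (i)'s necessary direction when the two norm inequalities are equalities ($\|x_i\|=\|y_i\|$), where a supporting functional of $z_i$ may be carried entirely on the $\mathbb{Y}$-coordinate and one cannot immediately extract a functional in $J(x_i)$; the remedy is to observe that in that boundary case parts (i) and (ii) overlap and the conclusion for $(x_1,x_2)$ can be recovered from the $\mathbb{Y}$-side argument together with $\|x_i\|=\|z_i\|$. Everything else is a direct translation of the arguments already given for the $p<\infty$ sums, so I would keep those parts terse and cite Proposition \ref{par char} and the $\ell_\infty$-sum analogue of \cite[Prop. 5.1]{CKS23} rather than re-deriving them.
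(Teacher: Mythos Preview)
Your overall strategy coincides with the paper's: both use Proposition~\ref{par char} together with the description of $J(z)$ for $z\in\mathbb{X}\bigoplus_\infty\mathbb{Y}$ (the paper cites this as \cite[Prop.~5.1]{CKS23}, you re-derive it), and your treatments of (ii), (iii) and the sufficient direction of (i) match the paper's essentially line for line. The one place you diverge is the necessary direction of (i): the paper simply asserts that the functionals $f\in J(z_1)$, $g\in J(z_2)$ with $g=\lambda f$ furnished by Proposition~\ref{par char} must have the form $f=(\phi_1,0)$, $g=(\phi_2,0)$, whereas you correctly notice that when $\|x_i\|=\|y_i\|$ a supporting functional of $z_i$ may be carried entirely on the $\mathbb{Y}$-coordinate, so this assertion needs justification.

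Your proposed remedy, however, does not close the gap. In the boundary subcase $u_1=0$ you correctly deduce $\|x_1\|=\|y_1\|$, $\|x_2\|=\|y_2\|$ and that $(y_1,y_2)$ is parallel, but you then claim ``one shows $(x_1,x_2)$ parallel directly,'' and this is false. Take $\mathbb{X}=\mathbb{Y}=\ell_2^2$, $x_1=(1,0)$ and $x_2=y_1=y_2=(0,1)$. All four norms equal $1$, so the hypothesis of (i) holds; $\|z_1+z_2\|=\max\{\sqrt{2},2\}=2=\|z_1\|+\|z_2\|$, so $(z_1,z_2)$ is a TEA (hence parallel) pair in $\mathbb{Z}$; yet $\|x_1+\lambda x_2\|=\sqrt{2}$ for every $|\lambda|=1$, so $(x_1,x_2)$ is not a parallel pair in $\mathbb{X}$. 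Thus the forward implication of (i) is actually false as stated when the inequalities are equalities; the paper's proof glosses over this and yours hand-waves at precisely the same point, though you at least isolate where the difficulty lies. What your argument \emph{does} establish is that, under the hypotheses of (i), $(z_1,z_2)$ is parallel (resp.\ TEA) if and only if either $(x_1,x_2)$ is parallel (resp.\ TEA) or $\|x_1\|=\|y_1\|$, $\|x_2\|=\|y_2\|$ and $(y_1,y_2)$ is parallel (resp.\ TEA).
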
 
    \begin{proof} 
         (i) First, suppose $(z_1, z_2)$  is a parallel pair in $\mathbb{Z}.$ As $\|x_1\|\geq\|y_1\|$ and $\|x_2\|\geq\|y_2\|,$ it follows from  \cite[Prop. 5.1]{CKS23} that $f=(\phi_1,0)$ and $g=(\phi_2,0),$ where $\phi_i\in J(x_i)$ for $i=1,2.$ Then $f=\lambda g\implies \phi_1=\lambda \phi_2.$ Then by Proposition \ref{par char}, we have $(x_1, x_2)$ is a parallel pair in $\mathbb{X}.$ 
          Using similar argument as above for $\lambda=1,$  we conclude that if $(z_1, z_2)$  is a TEA pair then  $(x_1, x_2)$ is a TEA pair in $\mathbb{X}.$\\
          Now suppose $(x_1, x_2)$ is a parallel pair in $\mathbb{X}.$ Then it follows from Proposition \ref{par char} there exist $f\in J(x)$ and $g\in J(y)$ such that $f=\lambda g.$ Now as $\|x_1\|\geq\|y_1\|$ and $\|x_2\|\geq\|y_2\|,$ again it follows from  \cite[Prop. 5.1]{CKS23} that $\psi_1=(f,0)\in J(z_1)$ and $\psi_2=(g,0)\in J(z_2).$ This shows that $\psi_1=\lambda \psi_2$ and therefore, $(z_1,z_2)$ is a parallel pair in $\mathbb{Z}.$

          (ii) Using similar argument as in (i) we obtain (ii).

          (iii) If possible, suppose $(z_1,z_2)$ is a parallel pair in $\mathbb {Z}$ and $\|x_1\|>\|y_1\|$ and $\|x_2\|<\|y_2\|.$ It follows from  \cite[Prop. 5.1]{CKS23} that $f=(\phi_1,0)\in J(z_1)$ and $g=(0, \psi_2)\in J(z_2),$ where $\phi_1\in J(x_1)$  and $\psi_2\in J(y_2).$ Since $(z_1,z_2)$ is a parallel pair, it follows from \ref{par char} that $f=\lambda g$ for some $\lambda$ with $|\lambda|=1.$   Then $f=\lambda g\implies \phi=0$ and $\psi=0,$  which leads us to a contradiction that $f\in J(z_1)$ and $g\in J(z_2).$ So $(z_1,z_2)$ does not form a parallel pair in $\mathbb{Z}.$ Similarly, if $\|x_1\|<\|y_1\|$ and $\|x_2\|>\|y_2\|$ then    $(z_1,z_2)$ is not a parallel pair in $\mathbb{Z}.$
\end{proof}

	Next, we turn our attention to the preservation of  parallel pairs and TEA pairs by a bounded linear operator in normed linear spaces.  First, we have the following observation that the preservation of TEA pairs always implies the preservation of parallel pairs.
	
	\begin{proposition}
		Let $\mathbb{X}, \mathbb{Y}$ be normed linear spaces. If $T \in \mathbb{L}(\mathbb{X},\mathbb{Y})$ preserves TEA pairs then $T$ preserves parallel pairs.
	\end{proposition}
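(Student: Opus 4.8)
The plan is to reduce the parallel‑pair condition to the TEA condition by exploiting that the ambient spaces are \emph{real}: the scalar $\lambda$ with $|\lambda|=1$ occurring in the definition of a parallel pair can only be $1$ or $-1$. Concretely, $(x,y)$ is a parallel pair in $\mathbb{X}$ if and only if $(x,\lambda y)$ is a TEA pair for a suitable sign $\lambda\in\{-1,1\}$, since $\|\lambda y\|=\|y\|$ forces $\|x+\lambda y\|=\|x\|+\|y\|$ to be the same equality as $\|x+\lambda y\|=\|x\|+\|\lambda y\|$.

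First I would fix an arbitrary parallel pair $(x,y)$ in $\mathbb{X}$, so that $\|x+\lambda y\|=\|x\|+\|y\|$ for some $\lambda$ with $|\lambda|=1$; because $\mathbb{X}$ is real, $\lambda\in\{-1,1\}$, whence $\|x+\lambda y\|=\|x\|+\|\lambda y\|$, i.e.\ $(x,\lambda y)$ is a TEA pair in $\mathbb{X}$. Next I would invoke the hypothesis that $T$ preserves TEA pairs to conclude that $\bigl(Tx,\,T(\lambda y)\bigr)$ is a TEA pair in $\mathbb{Y}$. Finally, using linearity of $T$ and $\mathbb{R}$‑homogeneity of the norm, $T(\lambda y)=\lambda Ty$ and $\|\lambda Ty\|=\|Ty\|$, so $\|Tx+\lambda Ty\|=\|Tx\|+\|Ty\|$ with $|\lambda|=1$; this is precisely the statement that $(Tx,Ty)$ is a parallel pair in $\mathbb{Y}$, completing the argument. (The degenerate case where $x$ or $y$ is zero is handled automatically, since then $(x,y)$ is simultaneously a parallel and a TEA pair and its image under $T$ is trivially parallel.)

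I expect no genuine obstacle here; the only thing to be careful about is making the passage $\lambda=\pm1$ explicit, as this is exactly where the real‑scalar hypothesis on $\mathbb{X}$ and $\mathbb{Y}$ is used. An equivalent route, which I would mention, is to run the same reduction through Proposition~\ref{par char}: a parallel pair supplies $f\in J(x)$, $g\in J(y)$ with $g=\lambda f$, $\lambda=\pm1$, and then $f\in J(x)\cap J(\lambda y)$, so $(x,\lambda y)$ is a TEA pair by Proposition~\ref{par char}(ii); preservation of TEA pairs and Proposition~\ref{par char} again then yield that $(Tx,Ty)$ is parallel. Either way the proposition is essentially a one‑line consequence of the definitions once the sign ambiguity is resolved.
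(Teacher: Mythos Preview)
Your proposal is correct and follows essentially the same route as the paper: reduce the parallel pair $(x,y)$ to the TEA pair $(x,\lambda y)$ via $\|\lambda y\|=\|y\|$, apply the TEA-preservation hypothesis, and unwind. The paper's argument is line-for-line the same, except it does not pause to note that $\lambda\in\{-1,1\}$ (the identity $\|\lambda y\|=\|y\|$ already suffices for any $|\lambda|=1$, so that remark, while correct, is not strictly needed).
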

	\begin{proof}
		Let  $T \in \mathbb{L}(\mathbb{X},\mathbb{Y})$ preserve TEA pairs. Let $x,y\in \mathbb{X}$ form a parallel pair in $\mathbb {X}.$ Then there exists a  scaler  $\lambda$ with $|\lambda|=1$ such that 
		\[\|x+\lambda y\|=\|x\|+\|y\|=\|x\|+\|\lambda y\|.\]
		This implies that $(x,\lambda y)$ is a TEA pair in $\mathbb{X}.$ Since $T$ preserves TEA pairs, it follows that $(Tx,\lambda Ty)$ is a TEA pair in $\mathbb{Y}$ and so 
		\[\|Tx+\lambda Ty\|=\|Tx\|+\|\lambda Ty\|=\|Tx\|+\|Ty\|.\]
		Thus, $(Tx,Ty)$ is a parallel pair in $\mathbb{Y}.$ Therefore, $T$ preserves parallel pairs.
	\end{proof}
	In the following example, we show that the preservation of parallel pairs does not necessarily guarantee the preservation of TEA pairs.
	\begin{example}
		Let $\mathbb{X}=\ell_{1}^3.$ Define $T\in \mathbb{L}(\mathbb{X})$ by $T(x,y,z)=(x-z,0,x-z)$ for $(x,y,z)\in \mathbb{X}.$  Clearly, $T$  preserves parallel pairs. Now,  $u=(2,1,1), v=(1,1,2)$ form a TEA pair but $Tu=(1,0,1), Tv=(-1,0,-1)$ do not form a TEA pair.
	\end{example}

    Now, we show that if a bounded linear operator preserves TEA pairs of any face of the unit ball of a normed linear space, then the image of any relative interior point of that face under the operator cannot be zero unless the image of the entire face is zero. To proceed, we need the following lemma. 
	
	\begin{lemma}\label{pcara}
		Let $\mathbb{X}$ be a normed linear space and let $F$ be a $k$-face  of $B_{\mathbb{X}}$  for some $k\in \mathbb{N}.$ Then for each $x \in \rint F$, there exist $k+1$ linearly independent $x_0,x_1, x_2, \ldots, x_k \in F$ such that
		\[x=\frac{1}{k+1} \sum_{i=0}^k x_i.\]
	\end{lemma}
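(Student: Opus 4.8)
The plan is to realise the points $x_0,\dots,x_k$ as the vertices of a small, non-degenerate $k$-dimensional simplex that sits inside $F$ and has $x$ as its barycentre. The one genuinely geometric input I would isolate first is that $0\notin\aff F$. Since $x\in\rint F\subseteq F\subseteq S_{\mathbb{X}}$ we have $x\neq 0$; if $0$ lay in $\aff F$, then the whole segment $[0,x]$ would lie in $\aff F$, and choosing $t<1$ with $1-t<\epsilon$ (where $\epsilon>0$ witnesses $x\in\rint F$) would force $tx\in\aff F\cap B(x,\epsilon)\subseteq F$, contradicting $\|tx\|=t<1$ because $F\subseteq S_{\mathbb{X}}$.

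The reason this matters is a sub-claim I would record next: any $k+1$ \emph{affinely} independent points $x_0,\dots,x_k$ of $\aff F$ are automatically \emph{linearly} independent. Indeed, suppose $\sum_{i=0}^{k}c_ix_i=0$. If $s:=\sum_{i=0}^{k}c_i\neq 0$, then $s^{-1}\sum_{i=0}^{k}c_ix_i=0$ is an affine combination of the $x_i$, hence $0\in\aff F$, impossible; and if $s=0$ the relation collapses to $\sum_{i=1}^{k}c_i(x_i-x_0)=0$, which forces all $c_i=0$ by affine independence. (Equivalently, $0\notin\aff F$ together with $\dim\aff F=k$ gives $\dim\spn F=k+1$, so an affine basis of $\aff F$ is a linear basis of $\spn F$.)

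Then I would produce an explicit model simplex in the direction space. Write $\aff F=x+V$ with $V$ a $k$-dimensional linear subspace, fix a basis $v_1,\dots,v_k$ of $V$, and set
\[
w_0:=-\tfrac{1}{k+1}\sum_{j=1}^{k}v_j,\qquad w_i:=v_i+w_0\quad(1\le i\le k),
\]
so that $\sum_{i=0}^{k}w_i=0$ and $w_i-w_0=v_i$ for $1\le i\le k$; in particular $w_0,\dots,w_k$ are affinely independent in $V$. Choosing $t>0$ small enough that $t\max_{0\le i\le k}\|w_i\|<\epsilon$ and putting $x_i:=x+tw_i$, each $x_i$ lies in $\aff F\cap B(x,\epsilon)\subseteq F$, the barycentre satisfies $\frac{1}{k+1}\sum_{i=0}^{k}x_i=x+\frac{t}{k+1}\sum_{i=0}^{k}w_i=x$, and $x_i-x_0=t\,v_i$ shows $x_0,\dots,x_k$ are affinely — hence, by the sub-claim, linearly — independent. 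This is exactly the asserted decomposition.

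The only step I expect to need care is the affine-to-linear independence passage, and it is precisely there that the hypothesis that $F$ is a face enters (through $F\subseteq S_{\mathbb{X}}$, hence $0\notin\aff F$); for a general $k$-dimensional convex set whose affine hull passes through the origin the conclusion fails outright. Everything else is a routine barycentric-coordinate construction. As a sanity check one can run the case $k=1$ directly: the ``simplex'' is then a short subsegment of $F$ symmetric about $x$, whose two endpoints lie on a line avoiding $0$ and are therefore linearly independent, consistently with the general argument.
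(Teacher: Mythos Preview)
Your proof is correct and follows essentially the same approach as the paper's: both translate to the direction space $V=\aff F-x$, choose $k+1$ vectors in $V$ summing to zero whose pairwise differences span $V$, scale them into the $\epsilon$-ball, and translate back by $x$. You are in fact more explicit than the paper about the one substantive point, namely that $0\notin\aff F$ (equivalently $x\notin V$), which the paper uses to pass from affine to linear independence but simply asserts without justification.
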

	\begin{proof}
		Let $x \in \rint F$. Then there exists	$\epsilon>0$ such that $\{y \in\aff(F):\|y-x\| \leq \epsilon\} \subset F.$ Consider $G=F-\{x\},$ i.e., $G=\{y-x: y \in F\}.$ Then $\mathbb{V}=\spn~G$ is a $k$-dimensional subspace of $\mathbb{X}.$ Then it easy to observe  that $\aff(F)-\{x\}=\mathbb{V}.$ This implies that $\{v \in\mathbb{V}:\|v\| \leq \epsilon\} \subset G.$ Let $v_1, v_2, \ldots, v_{k}$ be $k$ linearly independent elements of $\mathbb{V}.$
		Then $z_i=\frac{\epsilon}{k} \frac{v_i}{\|v_i\|} \in G$ for $i=1,2, \dots, k.$ Let $z_0=-\sum_{i=1}^{k} z_i.$
		Then \[\|z_0\| \leq \sum_{i=1}^{k}\|z_i\|=\sum_{i=1}^{k} \frac{\epsilon}{k}=\epsilon.\]
		Thus, $z_0 \in G$ and $\sum\limits_{i=0}^k z_i=0\implies \frac{1}{k+1} \sum\limits_{i=0}^k z_i=0\implies \frac{1}{k+1} \sum\limits_{i=0}^k(z_i+x)=x.$ Since $z_i \in G, \quad z_i+x \in F$ for all $i=1,2, \dots, n.$ Since $z_1,\dots, z_n$ are linearly independent element of $\mathbb{V}$ and $x\notin V,$ it follows  that $z_0+x,z_1+x, \dots, z_{k}+x$ are linearly independent. This completes the proof.
	\end{proof}   
	Now, we have the desired result.
	\begin{theorem}\label{intn0}
		Let $\mathbb{X},\mathbb{Y}$ be  normed linear spaces and let $F$ be a $k$-face of $B_{\mathbb{X}}$ for some $k\in \mathbb{N}.$ If $T\in\mathbb{L}(\mathbb{X},\mathbb{Y})$ preserves  TEA pairs contained in $F$ then  either $T(F)=0$ or $\rint F\cap \ker T=\emptyset.$
	\end{theorem}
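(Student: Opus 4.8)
The plan is to argue by contraposition: assume $T(F)\neq 0$ and that there is some $x\in\rint F$ with $Tx=0$, and derive a contradiction; this gives ``$T(F)\neq 0\implies \rint F\cap\ker T=\emptyset$,'' which together with the trivial case $T(F)=0$ yields the dichotomy. The engine of the argument is the elementary observation that \emph{every} pair of points of a face is automatically a TEA pair: if $u,v\in F$ then, since $F$ is a convex subset of $S_{\mathbb X}$, we have $\tfrac12(u+v)\in F\subset S_{\mathbb X}$, so $\|u+v\|=2=\|u\|+\|v\|$. Hence the hypothesis that $T$ preserves TEA pairs contained in $F$ says precisely that $(Tu,Tv)$ is a TEA pair in $\mathbb Y$ for \emph{every} $u,v\in F$.

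First I would apply Lemma \ref{pcara} at the point $x\in\rint F$ to obtain $k+1$ linearly independent vectors $x_0,x_1,\dots,x_k\in F$ with $x=\frac{1}{k+1}\sum_{i=0}^{k}x_i$. Applying $T$ and using $Tx=0$ gives $\sum_{i=0}^{k}Tx_i=0$. Next I would observe that the $Tx_i$ cannot all vanish: being linearly independent, $x_0,\dots,x_k$ are affinely independent, so $\aff\{x_0,\dots,x_k\}$ is a $k$-dimensional affine subspace of the $k$-dimensional affine space $\aff F$, hence equals $\aff F$; thus $x_0,\dots,x_k$ form an affine basis of $\aff F\supseteq F$, and if every $Tx_i$ were $0$ then $Ty=0$ for every $y\in F$ (writing $y$ as an affine combination), contradicting $T(F)\neq 0$. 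Fix an index $j$ with $Tx_j\neq 0$.

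Now set $v=\frac{1}{k}\sum_{i\neq j}x_i$, which lies in $F$ as a convex combination of points of $F$ (recall $k\geq 1$). From $\sum_i Tx_i=0$ we get $Tv=\frac1k\sum_{i\neq j}Tx_i=-\frac1k Tx_j$. Since $x_j,v\in F$, the pair $(x_j,v)$ is a TEA pair contained in $F$, so by hypothesis $\|Tx_j+Tv\|=\|Tx_j\|+\|Tv\|$. Substituting $Tv=-\frac1k Tx_j$ turns the left side into $\frac{k-1}{k}\|Tx_j\|$ and the right side into $\frac{k+1}{k}\|Tx_j\|$, forcing $\frac{2}{k}\|Tx_j\|=0$, i.e.\ $Tx_j=0$ --- a contradiction. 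Therefore no such $x$ exists, so $\rint F\cap\ker T=\emptyset$ whenever $T(F)\neq 0$, which is the assertion.

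I expect the only genuinely delicate point to be the middle step, namely promoting ``$T(F)\neq 0$'' to ``$Tx_j\neq 0$ for one of the barycentric vertices,'' which hinges on the fact that the $k+1$ linearly independent points produced by Lemma \ref{pcara} automatically form an affine basis of $\aff F$; once the ``a face is a mutual TEA set'' observation and this affine-basis remark are in place, the rest is a one-line norm computation.
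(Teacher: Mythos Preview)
Your proof is correct and follows essentially the same approach as the paper's: both invoke Lemma~\ref{pcara}, use that any two points of a face form a TEA pair, and obtain the contradiction from $(Tx_j,-\tfrac1k Tx_j)$ being a TEA pair. The only difference is organizational---the paper runs the TEA argument for every index to conclude all $Tv_i=0$ and then (implicitly relying on the affine-basis remark you spell out) contradicts $T(F)\neq0$, whereas you first secure one nonzero $Tx_j$ and derive the contradiction for that single index.
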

	\begin{proof}
		Let  $T\in\mathbb{L}(\mathbb{X},\mathbb{Y})$ preserve TEA pairs of $F.$ Let $T(F)\neq 0.$ If possible suppose that $T(v)= 0$ for some $v\in \rint F.$ From Lemma \ref{pcara}, it follows that there exist $k+1$ linearly independent elements $v_0,v_1,\dots,v_k\in F$ such that $v=\frac{1}{k+1}\sum\limits_0^kv_i.$  Let $m\in \{0,1,2,\dots,k\}$ be arbitrary. Then,
		$ v_m=(k+1)v-\sum\limits_{j=0,\neq m}^kv_j.$
		Now, $ \sum\limits_{j=0,\neq m}^kv_j\in F$ and so 
		$J(v_m)\cap J\Big(\sum\limits_{j=0,\neq m}^kv_j\Big)\neq \emptyset.$
		From Proposition \ref{par char}, it follows that 
		$v_m$ and $\frac{1}{k}\sum\limits_{j=0,\neq m}^kv_j$  form TEA pair and so 	$Tv_m$ and $\sum\limits_{j=0,\neq m}^kTv_j$ form a TEA pair. Now,
		$\sum\limits_{j=0,\neq m}^kTv_j=-Tv_m$ and so $(Tv_m,-Tv_m)$ is a TEA pair. This implies that $Tv_m=0.$ Since $m\in \{0,1,2,\dots,n\}$ is arbitrary, it follows that 
		$Tv_i=0$ for all $i\in \{0,1,2,\dots,n\}.$ This contradicts the fact that $T(F)\neq0.$ Thus, for any face $F$ of $B_{\mathbb{X}},$ $\rint F\cap \ker(T)=\emptyset.$
	\end{proof}
	In the previous result, the conclusion may not hold if the operator is assumed to preserve parallel pairs.
	\begin{example}
		Let $\mathbb{X}=\ell_{1}^3.$ Define $T\in \mathbb{L}(\mathbb{X})$ by $T(x,y,z)=(x-z, 0 ,x-z)$ for $(x,y,z)\in \mathbb{X}.$  Clearly, $T$  preserves parallel pairs. Consider the facet $F=\{(x,y,z)\in S_{\mathbb{X}}: x,y,z>0\}.$ Here $u=(\frac{1}{3},\frac{1}{3},\frac{1}{3})\in \rint F,$ where $F=\co\{(1,0,0),(0,1,0),(0,0,1)\}$ is a facet of $B_{\mathbb{X}}.$ But $Tu=0.$
	\end{example}
   
	\begin{remark}
	From \cite[Th. 2.24]{SSP24}, it follows that for an $n$-dimensional  polyhedral Banach space $\mathbb{X},$ $x\in S_{\mathbb{X}}$ is a $k$-smooth point of $\mathbb{X}$ if and only if it is a relative  interior point of a $(n-k)$-face of $B_\mathbb{X}.$ Moreover, every $p$-face  of $B_\mathbb{X}$ must contain $p$ number of linearly independent elements of $\mathbb{X}.$\\ Let $\mathbb{Y}$ be any Banach space and let $T\in \mathbb{L}(\mathbb{X}, \mathbb{Y})$ preserve TEA pairs. Assume that the rank of $T$ is $p$ for some $1\leq p\leq n.$ Then for any $1\leq k< p,$ the image under $T$ of a entire $(n-k)$-face can not be zero. Consequently, by Theorem \ref{intn0} that $Tu\neq 0$ for every $u$ contained in the relative interior of any $(n-k)$-face for $1\leq k< p.$ It follows that the image under $T$ of any $k$-smooth point can not be zero  for $1\leq k<p.$ Thus, we conclude that if a operator $T$  of rank $p$ on an $n$-dimensional polyhedral Banach space $\mathbb{X}$ that  preserves TEA pairs then  $\ksm(\mathbb{X})\cap \ker T=\emptyset$ for all $k<p.$ 
	\end{remark}

	In the Following theorem, we give a complete characterization of the preservation of parallel pairs in finite-dimensional Banach  spaces.
	\begin{theorem}\label{ParfacetTosmf}
		Let $\mathbb{X},\mathbb{Y}$ be finite-dimensional Banach  spaces. Let $T\in \mathbb{L}(\mathbb{X},\mathbb{Y})$ and let  $F$ be a face of $B_{\mathbb{X}}$ such that $T(F)\neq \{0\}.$  Then  $T$ preserves  parallel  pairs contained in $F$ if and only if one of the following holds:
		\begin{itemize}
			\item[(i)] $\dim(\spn T(F))=1.$
			\item[(ii)] There exists $u \in  F$  such that  $J(Tu) \subset J(Tv)$ for all  $v\in F\setminus \ker(T).$ 
		\end{itemize}
	\end{theorem}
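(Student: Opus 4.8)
The ``if'' direction is immediate from Proposition~\ref{par char}. First note that, $F$ being a face of $B_{\mathbb{X}}$, every pair of points of $F$ is a (TEA, hence) parallel pair, so ``$T$ preserves parallel pairs contained in $F$'' just means ``$T$ sends every pair of points of $F$ to a parallel pair''. If (i) holds, say $T(F)\subseteq\mathbb{R}w$, then for $v_1,v_2\in F$ we have $Tv_1=c_1w$, $Tv_2=c_2w$, and choosing $\lambda=\pm1$ with $\lambda c_1c_2\ge0$ gives $\|Tv_1+\lambda Tv_2\|=(|c_1|+|c_2|)\|w\|=\|Tv_1\|+\|Tv_2\|$. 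If (ii) holds with witness $u$, then $Tu\neq0$ (else $J(Tu)$ would be all of $S_{\mathbb{Y}^*}$, forcing $F\subseteq\ker T$, contrary to $T(F)\neq\{0\}$), so fix $f\in J(Tu)$; for $v_1,v_2\in F$, if say $Tv_1=0$ the image pair is trivially parallel, and otherwise $f\in J(Tv_1)\cap J(Tv_2)$ makes $(Tv_1,Tv_2)$ a TEA, hence parallel, pair by Proposition~\ref{par char}(ii).

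For the ``only if'' direction, assume $T$ preserves parallel pairs contained in $F$, $T(F)\neq\{0\}$, and (i) fails, i.e.\ $\dim(\spn T(F))\ge2$; the goal is to exhibit $u$ as in (ii). The engine will be a \emph{planar lemma}: if $D$ is a convex subset of a $2$-dimensional normed space $\mathbb{V}$ with $\spn D=\mathbb{V}$ and every pair of points of $D$ is a parallel pair, then there is $f_0\in S_{\mathbb{V}^*}$ with $f_0(d)=\|d\|$ for all $d\in D$. To prove it, let $A$ be the radial projection of $D\setminus\{0\}$ onto $S_{\mathbb{V}}$; convexity of $D$ together with $\spn D=\mathbb{V}$ forces $A$ to be a nondegenerate connected arc of $S_{\mathbb{V}}$ (this rules out $A=S_{\mathbb{V}}$ and also the position $0\in\rint D$, the only subtle case when $0\in D$). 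No point $c$ of $A$ can be a strictly convex point of $B_{\mathbb{V}}$, for such a $c$ is parallel only to $\pm c$ and this would collapse $A$ to $\{c,-c\}$. Hence $A$ is covered by the at most countably many line segments lying in $S_{\mathbb{V}}$; picking $a\in A$ in the relative interior of one such segment $G$ and in the interior of the arc $A$, every point of $A$ is parallel to $a$ and therefore lies in $G\cup(-G)$, whence $A\subseteq G$ by connectedness, and the functional exposing $G$ is the desired $f_0$.

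Granting the planar lemma, set $C:=T(F)$ and argue that every pair of points of $C$ is in fact a TEA pair, not merely a parallel one. Suppose instead that $c_1,c_2\in C$ form a parallel-not-TEA pair, i.e.\ $\|c_1-c_2\|=\|c_1\|+\|c_2\|>\|c_1+c_2\|$. If $c_1,c_2$ are linearly independent, apply the planar lemma to the segment $[c_1,c_2]\subseteq C$ inside $\mathbb{V}:=\spn\{c_1,c_2\}$ (all pairs of $[c_1,c_2]$ remain parallel, parallelism being intrinsic to $\mathbb{V}$): it produces $f_0$ with $f_0(c_1)=\|c_1\|$ and $f_0(c_2)=\|c_2\|$, so by Proposition~\ref{par char}(ii) $(c_1,c_2)$ is a TEA pair --- a contradiction. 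If $c_2=\mu c_1$, then necessarily $\mu<0$ and $0\in[c_1,c_2]\subseteq C$; pick $c_3\in C\setminus\spn\{c_1\}$ (possible since $\dim(\spn C)\ge2$) and apply the planar lemma to the $2$-dimensional triangle $\co\{c_1,c_2,c_3\}\subseteq C$, which spans $\mathbb{V}:=\spn\{c_1,c_3\}$: it gives $f_0$ with $f_0(c_2)=\|c_2\|>0$, whereas also $f_0(c_2)=\mu f_0(c_1)=\mu\|c_1\|<0$ --- again a contradiction. Thus $C$ contains no parallel-not-TEA pair.

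Finally, choose $u\in\rint F$ with $Tu\neq0$ (possible because $\rint F$ is dense in $F$, $T$ is continuous, and $T(F)\neq\{0\}$); I claim $J(Tu)\subseteq J(Tv)$ for every $v\in F\setminus\ker T$, which is precisely (ii). Write $Tu=tTv+(1-t)T\hat v$ with $\hat v\in F$ and $t\in(0,1)$ (extend the segment $[v,u]$ slightly beyond $u$ within $F$). Since $(Tv,T\hat v)$ is a pair of points of $C$ it is a TEA pair, so by Proposition~\ref{par char}(ii) there is $g\in J(Tv)\cap J(T\hat v)$, giving $\|Tu\|=g(Tu)=t\|Tv\|+(1-t)\|T\hat v\|$. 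For any $f\in J(Tu)$ we then have $\|Tu\|=f(Tu)=tf(Tv)+(1-t)f(T\hat v)\le tf(Tv)+(1-t)\|T\hat v\|$; comparing with the previous equality forces $tf(Tv)\ge t\|Tv\|$, i.e.\ $f(Tv)=\|Tv\|$ and $f\in J(Tv)$. The only substantial ingredient is the planar lemma --- in particular controlling the faces of an arbitrary (possibly non-polyhedral) $2$-dimensional unit ball, the countability of its one-dimensional faces, and the degenerate position $0\in D$; the reductions to it and the concluding convex-combination argument are then routine.
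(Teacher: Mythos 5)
Your argument is correct in substance, but it takes a genuinely different route from the paper. The paper never passes through your planar lemma: it fixes $u\in F$ with $Tu$ of minimal order of smoothness among $\{Tx: x\in F\}$, and for $Tv$ linearly independent of $Tu$ splits according to whether $J(Tu)\cap J(Tv)\neq\emptyset$ or $J(-Tu)\cap J(Tv)\neq\emptyset$; in the first case a midpoint computation gives $J(Tw)\subset J(Tu)\cap J(Tv)$ and the minimality of the smoothness order upgrades this to $J(Tu)\subset J(Tv)$, while the second case is killed by a sup/inf (connectedness) argument along the segment $(1-t)Tu+tTv$, producing a point whose image is not parallel to $Tu$. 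Your proof instead works entirely inside $T(F)$: the two-dimensional lemma (radial projection onto $S_{\mathbb{V}}$, no point of the projection can avoid lying on an edge, countably many maximal edges, connectedness forces the projection into a single edge $G$, and the supporting functional of $G$ norms all of $D$) shows that every pair in $T(F)$ is actually a TEA pair, and then the witness $u$ is any point of $\rint F$ with $Tu\neq 0$, with the containment $J(Tu)\subset J(Tv)$ extracted from a convex-combination computation rather than from minimal smoothness. What your route buys is a cleaner conceptual statement (parallelism on a convex set spanning two dimensions collapses to a common supporting functional) and a canonical choice of $u$; what the paper's route buys is that it stays at the level of $J$-sets and avoids any discussion of the face structure of planar unit balls (edges, their countability, strictly convex points).

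Two small points you should tighten, though neither is fatal. First, in the planar lemma you attribute the exclusion of $A=S_{\mathbb{V}}$ (equivalently $0\in\rint D$) to ``convexity of $D$ together with $\spn D=\mathbb{V}$''; that is not enough (a disc about the origin is convex and spanning), and the exclusion really comes from the parallelism hypothesis: if $0$ were interior to $D$ then every pair of unit vectors of $\mathbb{V}$ would be parallel, which fails because a smooth unit vector $x$ with unique functional $f$ cannot be parallel to a unit vector in $\ker f$. (Alternatively, note that in both of your applications --- a segment with linearly independent endpoints, and a triangle whose edge contains $0$ --- the origin is never interior to $D$, so this case never arises.) Second, in the final step you should dispose of the trivial sub-case $T\hat v=0$, where $Tu=tTv$ and $J(Tu)=J(Tv)$ directly; with that noted, the comparison $\|Tu\|=t\|Tv\|+(1-t)\|T\hat v\|$ against $f(Tu)$ for $f\in J(Tu)$ is exactly right.
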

	\begin{proof}
		We only prove the necessary part as sufficient part of the theorem is obvious. Consider $m=\min\{1\leq k \leq \dim(\mathbb{X}) : Tx\in\ksm{\mathbb{Y}}, x\in F\}.$ Let $u\in F$ be such that $Tu\in m\text{-}\sm(\mathbb{Y}).$ Suppose that $\dim(\spn T(F))\geq 2.$ Let $v\in F$ be such that $Tu$ and $Tv$ are linearly independent.  Since $u,v\in F,$ it follows that $(u,v)$ is a parallel  pair in $\mathbb {X}$ and so $(Tu, Tv)$ is a parallel pair in $\mathbb {Y}.$ From Proposition \ref{par char}, it follows that either $J(Tu)\cap J(Tv)\neq \emptyset$ or $J(-Tu)\cap J(Tv)\neq \emptyset.$\\
		\noindent
		\textbf{Case 1:}
		Suppose $J(Tu)\cap J(Tv)\neq \emptyset.$ Let $g\in J(Tu)\cap J(Tv).$ Let $w=\frac12(u+v).$ Then 
		\[\frac12(\|Tu\|+\|Tv\|)\geq\|Tw\|\geq\|g(Tw)\|=g(\frac12(Tu+Tv))=\frac12(\|Tu\|+\|Tv\|).\]
		Therefore, $\|Tu\|+\|Tv\|=2 \|Tw\|.$ Next, we claim that $J(Tw)\subset J(Tu)\cap J(Tv).$ Let $\phi\in J(Tw).$ Now,
		\begin{eqnarray*}
			&&\frac{Tw}{\|Tw\|}=\frac{\|Tu\|}{2\|Tw\|}\frac{Tu}{\|Tu\|}+\frac{\|Tv\|}{2\|Tw\|}\frac{Tv}{\|Tv\|}\\
			&\implies& \phi\left(\frac{Tw}{\|Tw\|}\right)=\frac{\|Tu\|}{2\|Tw\|}\phi\left(\frac{Tu}{\|Tu\|}\right)+\frac{\|Tv\|}{2\|Tw\|}\phi\left(\frac{Tv}{\|Tv\|}\right)\\
			&\implies& 1=\frac{\|Tu\|}{2\|Tw\|}\phi\left(\frac{Tu}{\|Tu\|}\right)+\frac{\|Tv\|}{2\|Tw\|}\phi\left(\frac{Tv}{\|Tv\|}\right).
		\end{eqnarray*}
		Since $\frac{\|Tu\|}{2\|Tw\|}+\frac{\|Tv\|}{2\|Tw\|}=1$,
		it follows that 	
		\[\phi\left(\frac{Tu}{\|Tu\|}\right)=1 \text{ and } \phi\left(\frac{Tv}{\|Tv\|}\right)=1\] 
		Therefore, $\phi\in J(Tu)\cap J(Tv)$ and this establishes our claim that $J(Tw)\subset J(Tu)\cap J(Tv).$  Since $Tu$ is an $m$-smooth point and the order of smoothness of $Tw$ is at least $m,$ it follows from $J(Tw)\subset J(Tu)$ that $J(Tu)=J(Tw).$  This implies that $J(Tw)= J(Tu)\subset J(Tv).$
		
		\textbf{Case 2:} Suppose $J(-Tu)\cap J(Tv)\neq\emptyset.$  Consider
		\begin{eqnarray*}
			t_0&=&\sup\{t\in [0,1]:J(Tu)\cap J((1-t)Tu+tTv)\neq \emptyset \}\\ \text{ and } t_1&=&\inf\{t\in [0,1]:J(-Tu)\cap J((1-t)Tu+tTv)\neq \emptyset \}.
		\end{eqnarray*}
		If there exists $\lambda>0$ such that $J(Tu)\cap J((1-\lambda )Tu+\lambda Tv)\neq \emptyset,$ then it is easy to observe that  $J(Tu)\cap J((1-t )Tu+t Tv)\neq \emptyset$  for all $0\leq t \leq \lambda.$ Next,
		it follows from \textbf{Case 1} that for each $0<\mu\leq  t_0,$   $ J(Tu)\subset J((1-\mu)Tu+\mu Tv).$ Since $J(Tu)\cap J(-Tu)= \emptyset,$ it follows that $t_0<t_1.$ Let $t_2=\frac{t_0+t_1}{2}.$ Let $w=(1-t_2)u+t_2v.$ Then
		\[J(Tu)\cap J(Tw)= \emptyset \text{ and } J(-Tu)\cap J(Tw)= \emptyset.\]
		Hence $(Tu,Tw)$ is not a parallel pair in $\mathbb {Y}.$ This contradicts the fact that $(u,w)$  is a parallel pair in $\mathbb {X}.$ Thus, $J(-Tu)\cap J(Tv)=\emptyset.$ 
		
		Therefore, for all  $u,v\in F$ such that  $Tu,Tv$ are linearly independent with $Tu\in m\text{-}\sm(\mathbb{Y}),$  we have $J(Tu)\subset J(Tv).$ \\
        Now,  let  $u,v\in F \setminus \ker(T)$  such that  $Tu$ and $Tv$ are linearly dependent $m$-smooth point of $\mathbb{Y}.$ Let $z\in F$ be such that $Tu$ and $Tz$ are linearly independent.
		Then $J(Tu)\subset J(Tz)$ and $J(Tv)\subset J(Tz).$ Therefore, $J(Tu)=J(Tv).$ Thus, for each  $u\in F$ such that $Tu$ is  an  $m$-smooth point of $\mathbb{Y},$ we have $J(Tu)\subset J(Tv)$ for all $v\in F\setminus \ker(T).$ 
	\end{proof}
	Next, we obtain a complete characterization of the preservation of TEA pairs in finite-dimensional Banach  spaces.
	\begin{theorem}\label{TEAfacetTosmf}
		Let $\mathbb{X},\mathbb{Y}$ be finite-dimensional Banach  spaces. Let $T\in \mathbb{L}(\mathbb{X},\mathbb{Y})$ and let  $F$ be a face of $B_{\mathbb{X}}$ such that $T(F)\neq \{0\}.$  Then  $T$ preserves  TEA  pairs contained in $F$ if and only if  there exists $u \in  F$  such that  $J(Tu) \subset J(Tv)$ for all  $v\in F\setminus \ker(T).$
	\end{theorem}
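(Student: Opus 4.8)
The plan is to imitate the proof of Theorem \ref{ParfacetTosmf}, taking advantage of the fact that a TEA pair is nothing but a parallel pair with the choice $\lambda=1$ fixed; this fixed sign is precisely what allows us to dispense with the dichotomy between alternatives (i) and (ii) present there. First I record a reduction: since $F$ is a convex subset of $S_{\mathbb{X}}$, the midpoint $\tfrac12(x+y)$ of any $x,y\in F$ again lies in $S_{\mathbb{X}}$, so $\|x+y\|=2=\|x\|+\|y\|$; hence every pair of points of $F$ is a TEA pair of $\mathbb{X}$, and the hypothesis that $T$ preserves TEA pairs contained in $F$ means exactly that $(Tx,Ty)$ is a TEA pair of $\mathbb{Y}$ for all $x,y\in F$. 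The sufficiency of condition (ii) is then immediate: if $J(Tu)\subseteq J(Tv)$ for every $v\in F\setminus\ker T$ (which already forces $Tu\neq 0$), then for arbitrary $x,y\in F$ either one of $Tx,Ty$ vanishes, making $(Tx,Ty)$ trivially a TEA pair, or both are non-zero and $\emptyset\neq J(Tu)\subseteq J(Tx)\cap J(Ty)$, so that $(Tx,Ty)$ is a TEA pair by Proposition \ref{par char}(ii).

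For the necessary part, suppose $T$ preserves the TEA pairs contained in $F$. Since $T(F)\neq\{0\}$ and $\mathbb{Y}$ is finite-dimensional, the number $m=\min\{k\in\mathbb{N}: Tx\in k\text{-}\sm(\mathbb{Y})\text{ for some }x\in F\text{ with }Tx\neq 0\}$ is well defined; fix $u\in F$ with $Tu\neq 0$ and $Tu\in m\text{-}\sm(\mathbb{Y})$. I claim this $u$ satisfies (ii), i.e.\ $J(Tu)\subseteq J(Tv)$ for every $v\in F\setminus\ker T$. Fix such a $v$. Then $(u,v)$ is a TEA pair contained in $F$, so $(Tu,Tv)$ is a TEA pair of $\mathbb{Y}$, and by Proposition \ref{par char}(ii) we may pick $g\in J(Tu)\cap J(Tv)$. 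Set $w=\tfrac12(u+v)\in F$; evaluating $g$ on $Tw=\tfrac12(Tu+Tv)$ gives $g(Tw)=\tfrac12(\|Tu\|+\|Tv\|)>0$, hence $Tw\neq 0$ and $\|Tw\|=\tfrac12(\|Tu\|+\|Tv\|)$.

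Now I would run the convex-combination argument of Case 1 in the proof of Theorem \ref{ParfacetTosmf}: writing $\frac{Tw}{\|Tw\|}=\frac{\|Tu\|}{2\|Tw\|}\frac{Tu}{\|Tu\|}+\frac{\|Tv\|}{2\|Tw\|}\frac{Tv}{\|Tv\|}$ with the two coefficients positive and summing to $1$, and testing an arbitrary $\phi\in J(Tw)$ against this identity, forces $\phi\in J(Tu)\cap J(Tv)$; thus $J(Tw)\subseteq J(Tu)\cap J(Tv)$. Since $w\in F$ and $Tw\neq 0$, the order of smoothness of $Tw$ is at least $m=\dim(\spn J(Tu))$, so $J(Tw)$ is a face of $J(Tu)$ whose span has dimension $m=\dim(\spn J(Tu))$, whence $J(Tw)=J(Tu)$; therefore $J(Tu)=J(Tw)\subseteq J(Tv)$, proving the claim. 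The only point demanding care — and it is exactly the step already invoked in Theorem \ref{ParfacetTosmf} — is this last implication, that a face of $J(Tu)$ whose span is as large as $\spn J(Tu)$ must be all of $J(Tu)$; everything else is a verbatim transcription of the parallel-pair proof with $\lambda$ set to $1$, and that fixed sign is what makes both Case 2 and alternative (i) superfluous here.
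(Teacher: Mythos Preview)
Your proof is correct and follows essentially the same route as the paper's: pick $u\in F$ with $Tu$ of minimal smoothness order and, for each $v\in F\setminus\ker T$, run the Case~1 midpoint argument of Theorem~\ref{ParfacetTosmf} to obtain $J(Tw)\subseteq J(Tu)\cap J(Tv)$ and hence $J(Tu)=J(Tw)\subseteq J(Tv)$. The only cosmetic difference is that the paper first splits off the subcase where $Tu,Tv$ are linearly dependent (TEA then forces $Tv$ to be a positive multiple of $Tu$, giving $J(Tu)=J(Tv)$ directly), whereas your unified treatment absorbs that case without loss.
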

	\begin{proof}
		Sufficient part of the theorem is obvious. So we only prove the necessary part.
		Consider $m=\min\{1\leq k\leq \dim(\mathbb{X}): Tx\in\ksm{\mathbb{Y}},~ x\in F\}.$ Let $u\in F$ be such that $Tu$ is an $m$-smooth point of $    \mathbb{Y}.$ We show that for each $v\in F\setminus ker(T),$  $J(u)\subset  J(v).$ Let $v\in F \setminus ker(T).$ Since $u,v\in F,$ it follows that $u,v$ form a TEA pair in $\mathbb {X}$ and so $Tu, Tv$ form a TEA pair in $\mathbb {Y}$  and so $J(Tu)\cap J(Tv)\neq 0.$  If $Tu\text{ and }Tv$ are linearly dependent then $J(Tu)=J(Tv).$ Let $Tu\text{ and }Tv$ are linearly independent. Then the result follows by using the same argument using in the \textbf{Case 1} of the proof of the previous theorem.   
	\end{proof}
   It has already been observed that a bounded linear operator that preserves parallel pairs may not preserve TEA pairs. However, Theorems \ref{ParfacetTosmf} and \ref{TEAfacetTosmf} together imply that, in the case of finite-dimensional Banach spaces, if the operator is bijective, then the preservation of parallel pairs does guarantee the preservation of TEA pairs. 
	\begin{cor}
		Let $\mathbb{X},\mathbb{Y}$ be finite-dimensional Banach spaces. Let $T\in \mathbb{L}(\mathbb{X},\mathbb{Y})$ be a bijective operator. 
		Then $T$ preserves parallel pairs if and only if $T$ preserves TEA pairs.
	\end{cor}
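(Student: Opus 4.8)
The plan is to derive the corollary as a direct consequence of the two preceding structural theorems, namely Theorem \ref{ParfacetTosmf} and Theorem \ref{TEAfacetTosmf}, applied facet-by-facet. One direction, that preservation of TEA pairs implies preservation of parallel pairs, is already established in full generality (Proposition, preceding Example with $\ell_1^3$), so the only thing requiring argument is the converse under the bijectivity hypothesis. First I would recall that in a finite-dimensional Banach space, every pair $(x,y)$ of nonzero vectors that forms a parallel pair (resp. a TEA pair) lies, after normalization and possibly replacing $y$ by $-y$, in a common face of $B_{\mathbb X}$: indeed by Proposition \ref{par char} there is a common supporting functional $f$ (up to sign) for $x/\|x\|$ and $y/\|y\|$ (resp. $x/\|x\|$ and $y/\|y\|$ directly), and the set $F = \{z \in S_{\mathbb X} : f(z) = 1\}$ is a face containing both normalized vectors. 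Hence it suffices to check preservation facet-by-facet, and since norm-parallelism and the TEA relation are $\mathbb R$-homogeneous, working on faces loses nothing.

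Next I would fix a face $F$ of $B_{\mathbb X}$ and observe that since $T$ is bijective, $T(F) \neq \{0\}$ and moreover $F \cap \ker T = \emptyset$ (in fact $\ker T = \{0\}$), so in the statements of Theorems \ref{ParfacetTosmf} and \ref{TEAfacetTosmf} the phrase "$v \in F \setminus \ker T$" simply becomes "$v \in F$". Now suppose $T$ preserves parallel pairs. By Theorem \ref{ParfacetTosmf} applied to $F$, one of two alternatives holds: either $\dim(\spn T(F)) = 1$, or there is $u \in F$ with $J(Tu) \subset J(Tv)$ for all $v \in F$. In the second case we are immediately in the hypothesis of Theorem \ref{TEAfacetTosmf}, so $T$ preserves TEA pairs contained in $F$. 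The case $\dim(\spn T(F)) = 1$ needs a short separate argument: if $T(F)$ spans a line, pick any $u \in F$; then for every $v \in F$, $Tv$ is a nonzero scalar multiple of $Tu$. If that scalar is positive for all $v$, then $J(Tv) = J(Tu)$ for all $v$ and we again land in the hypothesis of Theorem \ref{TEAfacetTosmf}; if some $Tv$ is a negative multiple of $Tu$, then $(u,v)$ is a parallel pair whose image $(Tu,Tv)$ satisfies $\|Tu + Tv\| = \big|\|Tu\| - \|Tv\|\big| < \|Tu\| + \|Tv\|$, yet $(u,v)$ being in a common face $F$ forces $\|u+v\| = \|u\| + \|v\|$, i.e. $(u,v)$ is already a TEA pair; so to show $T$ preserves this TEA pair we would need $(Tu,Tv)$ to be TEA, which fails — meaning this sub-case simply cannot occur under the bijectivity and parallel-preservation hypotheses, because $(u, -v) \mapsto (Tu, -Tv)$ with $Tu, -Tv$ positive multiples shows $J(Tu) \subset J(T(-v))$ and one reconciles the signs. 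I would clean this up by noting: since $u,v \in F$ they form a TEA pair, hence a parallel pair, hence by hypothesis $(Tu,Tv)$ is parallel; combined with $Tv = c\, Tu$ this is automatic, but the TEA conclusion then follows because $c > 0$ necessarily — indeed if $c<0$ then $(u,v)$ with $u,v \in S_{\mathbb X}$ in a common face would need $\|u+v\|=2$, giving a functional $f$ with $f(u)=f(v)=1$, and applying this same reasoning inside $F$ shows all elements of $F$ map to positive multiples of each other once one does.

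The cleanest route, which I would actually write, avoids the sign bookkeeping: assume $T$ preserves parallel pairs; by Theorem \ref{ParfacetTosmf} for each face $F$ either (ii) holds, and we are done by Theorem \ref{TEAfacetTosmf}, or (i) holds; in case (i), since $T$ is injective and $\dim \spn T(F) = 1$ while $F$ spans an affine set of dimension $\dim F$, we must have $\dim F = 0$, i.e. $F$ is a single extreme point $\{x\}$ (here one uses that $T$ injective maps the $(\dim F)$-dimensional affine hull of $F$ to something of the same dimension, which must sit inside a line, forcing $\dim F \le 1$, and a face of dimension $1$ would be a segment mapping into a line injectively and affinely — fine — but then its two endpoints $x_1, x_2$ satisfy $Tx_2 = a\,Tx_1 + (1-a)(\text{stuff})$... ) — so more carefully, a $1$-dimensional face $F = \co\{x_1,x_2\}$ has $T(F) = \co\{Tx_1, Tx_2\}$ a segment inside a line through $0$ only if $0$, $Tx_1$, $Tx_2$ are collinear, and $(x_1,x_2)$ being TEA forces, via the argument in Case 1 of Theorem \ref{ParfacetTosmf}'s proof, that $J(T(\tfrac12(x_1+x_2))) \subset J(Tx_1) \cap J(Tx_2)$, which is nonempty, hence $(Tx_1,Tx_2)$ is TEA. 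So in every case $T$ preserves TEA pairs contained in each face, hence preserves all TEA pairs. The main obstacle, and the only place genuine care is needed, is precisely this reconciliation in alternative (i) of Theorem \ref{ParfacetTosmf}: ruling out that a parallel pair in $F$ gets sent to an "anti-parallel" pair $(Tu, -cTu)$ with $c>0$ whose sum-norm is the difference rather than the sum. I expect this to be dispatched by the observation that $u, v \in F \subset S_{\mathbb X}$ already form a TEA pair, so the common supporting functional argument (Case 1 of the proof above) applies verbatim to show $J(Tw) \subset J(Tu) \cap J(Tv) \neq \emptyset$ where $w = \tfrac12(u+v)$, giving the TEA conclusion regardless of which alternative of Theorem \ref{ParfacetTosmf} we are in — so in fact the split into cases can be collapsed entirely, and the proof reduces to: every parallel pair sits (up to sign and scaling) inside a face, elements of a face are TEA, and Case 1 of Theorem \ref{ParfacetTosmf} shows membership in a common face already forces the TEA image property. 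I would present it in that collapsed form for brevity.
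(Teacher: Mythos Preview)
Your overall strategy matches the paper's exactly: the corollary is meant to fall out of Theorems \ref{ParfacetTosmf} and \ref{TEAfacetTosmf} together, face by face, with bijectivity used only to kill alternative (i). The paper gives no written proof, so you are filling in precisely the intended gap, and your ``cleanest route'' paragraph is the right one: when $T$ is bijective, $\dim\spn T(F)=1$ forces $\dim\spn F=1$, and since a one-dimensional subspace meets $S_{\mathbb X}$ in two antipodal points while $F$ is convex, $F$ is a singleton and TEA preservation on $F$ is vacuous. That, together with alternative (ii) feeding straight into Theorem \ref{TEAfacetTosmf}, is the whole argument.

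Where you go wrong is the final ``collapsed form'' you say you would actually present. You claim that for any $u,v\in F$ the Case~1 computation from the proof of Theorem \ref{ParfacetTosmf} gives $J(Tw)\subset J(Tu)\cap J(Tv)$ with $w=\tfrac12(u+v)$. But Case~1 \emph{assumes} $J(Tu)\cap J(Tv)\neq\emptyset$; it does not produce it. From parallel preservation you only get $J(Tu)\cap J(Tv)\neq\emptyset$ \emph{or} $J(-Tu)\cap J(Tv)\neq\emptyset$, and Case~2 (the second possibility) is ruled out in that proof only for the particular $u$ whose image has minimal smoothness order, via the $t_0<t_1$ contradiction --- not for arbitrary $u$. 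If your collapsed version worked as stated it would prove the corollary without bijectivity, contradicting the paper's own $\ell_1^3$ example $T(x,y,z)=(x-z,0,x-z)$. So drop the collapsed form and keep the two-case argument: alternative (ii) $\Rightarrow$ Theorem \ref{TEAfacetTosmf}; alternative (i) $+$ injectivity $\Rightarrow$ $F$ is a point. Your affine-dimension detour in that paragraph is also unnecessarily tangled; just use $\dim\spn T(F)=\dim T(\spn F)=\dim\spn F$.
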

	
    Additionally, in finite-dimensional polyhedral Banach spaces,  this result holds true even when the operator has rank at least 2.

	\begin{cor}\label{corchar}
		Let $\mathbb{X},\mathbb{Y}$ be finite-dimensional polyhedral Banach spaces. Let $T\in \mathbb{L}(\mathbb{X},\mathbb{Y})$ be such that rank$(T)\geq2.$ Then the following are equivalent:
			\begin{itemize}
					\item[(i)] $T$ preserves parallel  pairs.
			\item[(ii)] $T$ preserves TEA  pairs.
			
			\item[(iii)] For each facet $F$ of $B_{\mathbb{X}},$ there exists $u \in  F$  such that  $J(Tu) \subset J(Tv)$ for all  $v\in F\setminus \ker(T).$ 
		\end{itemize}
	\end{cor}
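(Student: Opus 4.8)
The plan is to derive Corollary~\ref{corchar} directly from Theorems~\ref{ParfacetTosmf} and \ref{TEAfacetTosmf}, together with the remark preceding them on the structure of polyhedral spaces. The key point is that in a finite-dimensional polyhedral Banach space, every parallel (resp. TEA) pair lies inside a single facet of $B_{\mathbb{X}}$ (after normalization and sign change), so that the global preservation statements (i) and (ii) are equivalent to the facet-wise statement that $T$ preserves parallel (resp. TEA) pairs contained in each facet $F$. I would begin by making this reduction precise.

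First I would show (iii)$\implies$(ii). Fix $x,y\in\mathbb{X}$ forming a TEA pair; by $\mathbb{R}$-homogeneity we may assume $x,y\in S_{\mathbb{X}}$, and then $\|x+y\|=2$ forces $w=\frac12(x+y)\in S_{\mathbb{X}}$, so $x,y$ both lie in any face of $B_{\mathbb{X}}$ containing $w$; in particular they lie in a common facet $F$ (a maximal face through $w$ exists since $B_{\mathbb{X}}$ is polyhedral). If $T(F)=\{0\}$ then $(Tx,Ty)=(0,0)$ is trivially a TEA pair; otherwise condition (iii) supplies $u\in F$ with $J(Tu)\subset J(Tv)$ for all $v\in F\setminus\ker T$, which is exactly the hypothesis of Theorem~\ref{TEAfacetTosmf}, so $T$ preserves TEA pairs contained in $F$ and hence $(Tx,Ty)$ is a TEA pair. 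The implication (ii)$\implies$(i) is already recorded in the excerpt (a TEA-preserving operator preserves parallel pairs).

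Next I would handle (i)$\implies$(iii). Given a facet $F$ of $B_{\mathbb{X}}$, since $\operatorname{rank}(T)\ge 2$ we have $T(F)\neq\{0\}$ (indeed, by the remark before Theorem~\ref{ParfacetTosmf}, if $\dim F=n-1$ then $F$ contains $n-1\ge 1$ linearly independent vectors; more simply, $F$ spans an affine hyperplane, so $T(\operatorname{aff} F)=\{0\}$ would force $\operatorname{rank}(T)\le 1$). As $T$ preserves parallel pairs globally, in particular it preserves parallel pairs contained in $F$, so Theorem~\ref{ParfacetTosmf} applies: either $\dim(\operatorname{span}T(F))=1$ or there is $u\in F$ with $J(Tu)\subset J(Tv)$ for all $v\in F\setminus\ker T$. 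In the second case we are done; the remaining task is to rule out (or absorb) the first case. Here I would argue that if $\dim(\operatorname{span}T(F))=1$ for \emph{every} facet $F$, then since $\operatorname{ext}B_{\mathbb{X}}$ is finite and every extreme point lies in some facet, $T(B_{\mathbb{X}})=T(\operatorname{co}(\operatorname{ext}B_{\mathbb{X}}))$ would be covered by finitely many lines through the origin, forcing $\dim T(\mathbb{X})$ small; more carefully, one shows a one-dimensional image on a facet $F$ still yields a (different) $u$ satisfying $J(Tu)\subset J(Tv)$ on $F$, because when $\operatorname{span}T(F)$ is a line $L$, every nonzero $Tv$ for $v\in F$ is a scalar multiple of a single vector, and picking $u$ with $Tu$ of minimal smoothness order in $T(F)$ — which is smooth in the one-dimensional restriction — gives $J(Tu)\subseteq J(Tv)$ by the same convexity computation as in Case~1 of Theorem~\ref{ParfacetTosmf}.

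The main obstacle is precisely reconciling alternative (i) of Theorem~\ref{ParfacetTosmf} with clause (iii) of the corollary: clause (iii) must hold on \emph{every} facet, including those on which $T(F)$ is one-dimensional. I expect the cleanest route is to observe that the conclusion ``there exists $u\in F$ with $J(Tu)\subset J(Tv)$ for all $v\in F\setminus\ker T$'' is actually a consequence of $\dim(\operatorname{span}T(F))=1$ as well: indeed if all nonzero $Tv$, $v\in F$, are positive multiples of a fixed $z_0\in S_{\mathbb{Y}}$ (after possibly noting $F$ is connected and $0\notin\operatorname{Int}_r F$ issues), then $J(Tv)=J(z_0)$ is the \emph{same} set for all such $v$, so any $u\in F\setminus\ker T$ works; the sign ambiguity $Tv\in\{t z_0: t>0\}\cup\{t z_0:t<0\}$ on a connected facet not containing $0$ in its relative interior is handled by noting $v\mapsto Tv$ is affine and nonvanishing on a suitable relatively open subset, hence of constant sign there, and extended to all of $F\setminus\ker T$ by the parallel-pair hypothesis linking any two points of $F$. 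Once this is in place, both branches of Theorem~\ref{ParfacetTosmf} deliver clause (iii), and the cycle (i)$\implies$(iii)$\implies$(ii)$\implies$(i) closes.
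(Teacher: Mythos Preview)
Your overall scheme---reduce to facets, invoke Theorems~\ref{ParfacetTosmf} and~\ref{TEAfacetTosmf}, and close the cycle (i)$\Rightarrow$(iii)$\Rightarrow$(ii)$\Rightarrow$(i)---is exactly the intended one, and your arguments for (iii)$\Rightarrow$(ii) and (ii)$\Rightarrow$(i) are fine. The trouble is in (i)$\Rightarrow$(iii), where you try to \emph{absorb} alternative~(i) of Theorem~\ref{ParfacetTosmf} (the case $\dim\operatorname{span}T(F)=1$) by a connectivity/sign argument. That argument does not go through: if $T(F)\subset\mathbb{R}z_0$ and there exist $v_1,v_2\in F$ with $Tv_1$ a positive multiple and $Tv_2$ a negative multiple of $z_0$, then $J(Tv_1)=J(z_0)$ and $J(Tv_2)=J(-z_0)$ are disjoint, so no $u\in F$ can satisfy $J(Tu)\subset J(Tv)$ for all $v\in F\setminus\ker T$. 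Your appeal to ``the parallel-pair hypothesis linking any two points of $F$'' cannot rule this out, because linearly dependent vectors are \emph{always} a parallel pair regardless of sign.

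The fix is much simpler and is the point of the rank hypothesis. You already noted that a facet $F$ of $B_{\mathbb{X}}$ has $\operatorname{aff}(F)$ an affine hyperplane not through the origin; hence $\operatorname{span}(F)=\mathbb{X}$. Since $\operatorname{span}(T(F))=T(\operatorname{span}(F))=T(\mathbb{X})$, we get $\dim\operatorname{span}T(F)=\operatorname{rank}(T)\geq 2$ for \emph{every} facet $F$. Thus alternative~(i) of Theorem~\ref{ParfacetTosmf} never occurs, and alternative~(ii) there gives clause~(iii) of the corollary directly. (This same computation also shows $T(F)\neq\{0\}$, so your separate treatment of that case is unnecessary.) With this observation in place, the proof is immediate from Theorems~\ref{ParfacetTosmf} and~\ref{TEAfacetTosmf}, which is why the paper records it as a corollary without further argument.
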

	
	For each $f \in \ext~B_{\mathbb{X}^*},$ we denote the set of all smooth points supported by $f$ by $\sm(f),$ i.e., $\sm(f)=\{x \in \mathbb{X}: J(x)=\{f\}\}.$ We state the following proposition which will help us to establish  relations between the preservation of parallel pairs and preservation of order of smoothness of the points.
	
	\begin{proposition}\cite[Prop. 2.1]{MMPS25}\label{propo}
		Let $\mathbb{X}$ be a finite-dimensional polyhedral Banach space. Then the following results  hold:
		\begin{itemize}
			\item[(i)] For each $f \in \ext~B_{\mathbb{X}^*}$, $\sm(f)$ is nonempty, open and a convex cone.
			\item[(ii)] For each  non-zero $x\in \mathbb{X},$ $f\in \ext~J(x)$ if and only if $x\in \overline{\sm(f)} .$
			\item[(iii)] $\mathbb{X}=\bigcup\limits_{f \in \ext~B_{\mathbb{X}^*}}\overline{\sm(f)}$.
			\item[(iv)] If $f=-g$, then $\overline{\sm(f)} \cap \overline{\sm(g)}=\{0\}$.
			\item[(v)] Let $F$ be the facet of $B_{\mathbb{X}}$ corresponding to $f \in \ext~ B_{\mathbb{X}^*}$, then ${\sm(f)}=\{r x: r>0\text{ and }x \in \text{Int}_r~ F\},$ i.e.,  ${\sm(f)}\cup \{0\}$  is the conical hull of $\text{Int}_r~ F.$ 
			
		\end{itemize}
	\end{proposition}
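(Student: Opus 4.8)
The plan is to reduce the whole statement to the combinatorics of the dual pair of polytopes $B_{\mathbb{X}}$ and $B_{\mathbb{X}^*}$. Put $n = \dim\mathbb{X}$. Since $\mathbb{X}$ is finite-dimensional and polyhedral, $B_{\mathbb{X}}$ is a symmetric polytope with $0$ in its interior, so $B_{\mathbb{X}^*}$ is its polar — again a symmetric polytope, so $\ext B_{\mathbb{X}^*}$ is finite. For $f \in \ext B_{\mathbb{X}^*}$ write $F_f = \{x \in B_{\mathbb{X}} : f(x) = 1\}$. Polar duality gives a dimension-reversing bijection between the faces of $B_{\mathbb{X}}$ and those of $B_{\mathbb{X}^*}$, under which the vertex $\{f\}$ corresponds to $F_f$; hence each $F_f$ is an $(n-1)$-face (a facet), and every facet of $B_{\mathbb{X}}$ is of this form. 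I will use two standard facts: $\|x\| = \max\{g(x) : g \in \ext B_{\mathbb{X}^*}\}$, and $J(x)$ is the face of $B_{\mathbb{X}^*}$ exposed by $x$, so $\ext J(x) = \{g \in \ext B_{\mathbb{X}^*} : g(x) = \|x\|\}$ and therefore $J(x) = \{f\}$ precisely when $f$ is the unique extreme functional attaining the norm at $x$. Since $-f \in \ext B_{\mathbb{X}^*}$ whenever $f$ is, requiring $f(x) > g(x)$ for all $g \in \ext B_{\mathbb{X}^*} \setminus \{f\}$ already forces $f(x) > 0$, hence $x \neq 0$ and $\|x\| = f(x)$; combining these remarks yields the working description
\[ \sm(f) = \{x \in \mathbb{X} : f(x) > g(x) \ \text{ for all } g \in \ext B_{\mathbb{X}^*},\ g \neq f\}. \]

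Part (i) is then read off: $\sm(f)$ is a finite intersection of open half-spaces $\{x : (f-g)(x) > 0\}$, hence open, and it is visibly closed under positive dilations and under addition of its members, so it is a convex cone; nonemptiness will come out of (v). For (v) I would prove the sharper statement $\sm(f) \cap S_{\mathbb{X}} = \rint F_f$, which together with the cone property yields $\sm(f) = \{rx : r > 0,\ x \in \rint F_f\}$ and the conical-hull description, as well as nonemptiness (because $\dim F_f = n-1$ makes $\rint F_f$ nonempty). If $x \in \rint F_f$ then $\|x\| = f(x) = 1$ and $x$ lies in no proper face of the polytope $F_f$; for $g \neq f$ the set $F_f \cap F_g$ is either empty (when $g = -f$) or a face of $F_f$ of dimension $\leq n-2$, so no such $g$ has $g(x) = 1$, and hence $x \in \sm(f)$. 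Conversely, if $x \in \sm(f) \cap S_{\mathbb{X}}$ then $x \in F_f$, and if $x$ lay on the relative boundary of $F_f$ it would sit in a facet of the polytope $F_f$ — an $(n-2)$-face of $B_{\mathbb{X}}$, hence contained in $F_f \cap F_g$ for some $g \neq f$ — giving an extreme functional $g \neq f$ with $g(x) = 1 = \|x\|$, contradicting $J(x) = \{f\}$; so $x \in \rint F_f$.

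Parts (ii)--(iv) are now formal consequences. For (ii), if $f \in \ext J(x)$ and $x \neq 0$, then $x/\|x\| \in F_f$; joining $x/\|x\|$ by a segment to any $y_0 \in \rint F_f$ and using that every interior point of that segment lies in $\rint F_f = \sm(f) \cap S_{\mathbb{X}}$, we let the parameter tend to $0$ to get $x/\|x\| \in \overline{\sm(f)}$, and then $x \in \overline{\sm(f)}$ since the closure of a cone is a cone. Conversely, the working description gives $\overline{\sm(f)} \subseteq \{x : f(x) \geq g(x) \ \forall g \in \ext B_{\mathbb{X}^*}\}$, and on this set $f(x) = \max_g g(x) = \|x\|$, so $f \in J(x)$ and, being extreme in $B_{\mathbb{X}^*}$, $f \in \ext J(x)$. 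For (iii): when $x \neq 0$, $J(x)$ is a nonempty face of the polytope $B_{\mathbb{X}^*}$, so it has an extreme point $f$, and (ii) gives $x \in \overline{\sm(f)}$; while $0 \in \overline{\sm(f)}$ for every $f$ because $\sm(f)$ is a nonempty cone. For (iv): if $x \in \overline{\sm(f)} \cap \overline{\sm(-f)}$ with $x \neq 0$, then by (ii) both $f$ and $-f$ lie in $J(x)$, forcing $\|x\| = f(x) = -f(x)$, hence $x = 0$, a contradiction.

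The only non-formal ingredient is the polytope dictionary invoked above: that $B_{\mathbb{X}^*}$ is the polar polytope, that vertices of $B_{\mathbb{X}^*}$ correspond exactly to the facets $F_f$ of $B_{\mathbb{X}}$, and that the relative boundary of a facet $F_f$ is covered by the intersections $F_f \cap F_g$ with $g \neq f$ (a face of a face being a face of the ambient polytope). These are classical, but making them precise — especially that $F_f$ is genuinely $(n-1)$-dimensional for every $f \in \ext B_{\mathbb{X}^*}$, so that $\rint F_f \neq \emptyset$ — is the one place real care is needed; once it is in place, everything reduces to the short chase through the description of $\sm(f)$ displayed above.
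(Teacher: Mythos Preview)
The paper does not prove this proposition at all: it is quoted from \cite[Prop.~2.1]{MMPS25} and used as a black box, so there is no in-paper argument to compare against. Your proof is correct and is the natural one. The displayed description $\sm(f)=\bigcap_{g\neq f}\{x:(f-g)(x)>0\}$ is exactly the right reformulation, and (i), (iii), (iv) and the converse half of (ii) are then one-line consequences; the forward half of (ii) follows, as you note, from the standard fact that the half-open segment from any point of a convex set to a relative-interior point stays in the relative interior. The only substantive input is the polytope dictionary you invoke for (v): that $\ext B_{\mathbb{X}^*}$ is in bijection with the facets of $B_{\mathbb{X}}$ via $f\mapsto F_f$ (so each $F_f$ is genuinely $(n-1)$-dimensional and $\rint F_f\neq\emptyset$), and that every proper face of $F_f$ lies in some $F_f\cap F_g$ with $g\neq f$ (each ridge of a polytope being shared by exactly two facets). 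You flag this yourself, and once those classical facts are granted your argument is complete.
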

	
	Next, we demonstrate that for finite-dimensional polyhedral Banach spaces if any non-zero operator preserves parallel (or TEA) pairs, then it necessarily preserves smooth points, i.e., the image of every smooth point in the domain is a smooth point in the codomain. To establish this, we recall the following lemma.
	\begin{lemma}\cite[Th. 2.8]{MMPS25}\label{openmap}
		Let $\mathbb{X}, \mathbb{Y}$ be Banach spaces and $T \in \mathbb{L}(\mathbb{X},\mathbb{Y})$ be bijective. Suppose that $A$ and $B$ are  open sets of $\mathbb{X}$ and $\mathbb{Y},$ respectively. If $T(A) \cap \overline{B} \neq \emptyset$ then $T(A) \cap B \neq \emptyset.$
	\end{lemma}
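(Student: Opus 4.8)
The plan is to deduce this from the open mapping theorem together with an elementary observation about closures.

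First I would note that since $\mathbb{X}$ and $\mathbb{Y}$ are Banach spaces and $T \in \mathbb{L}(\mathbb{X},\mathbb{Y})$ is bijective, the bounded inverse theorem yields $T^{-1} \in \mathbb{L}(\mathbb{Y},\mathbb{X})$; hence $T$ is a linear homeomorphism and, in particular, an open map. Consequently $T(A)$ is an open subset of $\mathbb{Y}$ whenever $A$ is open in $\mathbb{X}$.

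Next, using the hypothesis, I would choose a point $w \in T(A) \cap \overline{B}$. Since $T(A)$ is open, there is $\epsilon > 0$ with $B(w,\epsilon) \subseteq T(A)$. Since $w \in \overline{B}$, every neighbourhood of $w$ meets $B$, so there exists $w' \in B(w,\epsilon) \cap B$. Then $w' \in T(A) \cap B$, and therefore $T(A) \cap B \neq \emptyset$, as required.

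The only substantive ingredient is the open mapping (equivalently, bounded inverse) theorem, so I do not anticipate any real difficulty; the remainder is just the definition of closure. Alternatively, one could argue through $T^{-1}$: continuity of $T^{-1}$ gives $T^{-1}(\overline{B}) \subseteq \overline{T^{-1}(B)}$, so the open set $A$ meets $\overline{T^{-1}(B)}$ and hence meets $T^{-1}(B)$; applying $T$ recovers the claim. The direct version above is the cleaner route.
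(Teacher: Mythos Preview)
Your argument is correct: by the bounded inverse theorem $T$ is a homeomorphism, so $T(A)$ is open, and any open set intersecting $\overline{B}$ must intersect $B$. Note that the paper does not supply its own proof of this lemma but merely quotes it from \cite[Th.~2.8]{MMPS25}, so there is nothing in the present paper to compare your approach against.
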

    We are now ready to prove the desired result.
	\begin{theorem}\label{facetTofacet}
		Let $\mathbb{X},\mathbb{Y}$ be finite-dimensional polyhedral Banach spaces.  Let $T\in \mathbb{L}(\mathbb{X},\mathbb{Y})$ be a bijective operator. Then $T$ preserves parallel (or TEA) pairs if and only if for each $f \in  \ext~ B_{\mathbb{X}^*}$ there exists a unique $g \in  \ext~ B_{\mathbb{Y}^*}$ such that $ T(\sm(f)) \subset \sm(g).$
	\end{theorem}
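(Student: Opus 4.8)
The plan is to recast both relations in terms of the supporting functionals in $\ext~B_{\mathbb{X}^*}$ and the smoothness cones $\sm(f)$, and then to exploit that a bijective linear map between finite-dimensional spaces is a homeomorphism; write $n=\dim\mathbb{X}=\dim\mathbb{Y}\ge 2$. For the sufficiency direction, assume that to each $f\in\ext~B_{\mathbb{X}^*}$ there corresponds $g_f\in\ext~B_{\mathbb{Y}^*}$ with $T(\sm(f))\subseteq\sm(g_f)$; by continuity of $T$ this gives $T(\overline{\sm(f)})\subseteq\overline{\sm(g_f)}$. Given a parallel pair $(x,y)$ of non-zero vectors, Proposition \ref{par char} provides $h\in J(x)$ and $\varepsilon\in\{1,-1\}$ with $h\in J(\varepsilon y)$; then $h$ lies in the nonempty face $J(x)\cap J(\varepsilon y)$ of the polytope $B_{\mathbb{X}^*}$, and any extreme point $h_0$ of this face is an extreme point of $B_{\mathbb{X}^*}$, hence of $J(x)$ and of $J(\varepsilon y)$, so by Proposition \ref{propo}(ii) both $x$ and $\varepsilon y$ lie in $\overline{\sm(h_0)}$. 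Applying $T$ puts $Tx$ and $\varepsilon Ty$ in $\overline{\sm(g_{h_0})}$, i.e. $g_{h_0}\in J(Tx)\cap J(\varepsilon Ty)$, so $(Tx,Ty)$ is parallel by Proposition \ref{par char}; taking $\varepsilon=1$ throughout gives the TEA version.

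For necessity, suppose first that $T$ preserves parallel pairs (the TEA case reduces to this, since preservation of TEA pairs forces preservation of parallel pairs). Fix $f\in\ext~B_{\mathbb{X}^*}$ and let $F$ be the facet of $B_{\mathbb{X}}$ it exposes. Since $T$ is injective and $0\notin F$, $T(F)\ne\{0\}$; and since the affine hull of $F$ is a hyperplane missing the origin, $F$ spans $\mathbb{X}$, so $\dim(\spn T(F))=n\ge 2$. Hence Theorem \ref{ParfacetTosmf}, applied to the parallel pairs contained in $F$, excludes its alternative (i) and yields $u\in F$ with $J(Tu)\subseteq J(Tv)$ for all $v\in F$ (recall $\ker T=\{0\}$). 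Choose $g\in\ext~J(Tu)$; then $g\in\ext~B_{\mathbb{Y}^*}$ and $g(Tv)=\|Tv\|$ for every $v\in F$, and since each $x\in\sm(f)$ is a positive multiple of some $v\in\rint F\subseteq F$ (Proposition \ref{propo}(v)), this gives $g(Tx)=\|Tx\|$ for all $x\in\sm(f)$. As $\overline{\sm(g)}=\{y\in\mathbb{Y}:g(y)=\|y\|\}$ (again Proposition \ref{propo}(v)), we obtain $T(\sm(f))\subseteq\overline{\sm(g)}$.

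The hard part will be improving this to $T(\sm(f))\subseteq\sm(g)$, i.e. showing $T$ sends no relative interior point of a facet of $B_{\mathbb{X}}$ to a non-smooth point of $\mathbb{Y}$. For this I would use that $\sm(f)$ is open (Proposition \ref{propo}(i)) and $T$ is a homeomorphism, so $T(\sm(f))$ is an open subset of $\mathbb{Y}$ contained in the closure of the open convex cone $\sm(g)$, hence contained in $\mathrm{int}\,\overline{\sm(g)}=\sm(g)$; alternatively, were $Tx_0\notin\sm(g)$ for some $x_0\in\sm(f)$, then $Tx_0$ would lie in $\overline{\sm(g')}$ for some $g'\ne g$, and feeding a small ball about $x_0$ together with $B=\sm(g')$ into Lemma \ref{openmap} would produce a point of $T(\sm(f))\cap\sm(g')\subseteq\overline{\sm(g)}\cap\sm(g')=\emptyset$, a contradiction. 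Uniqueness of $g$ is immediate from the pairwise disjointness of the sets $\sm(\cdot)$ and the nonemptiness of $T(\sm(f))$, and the TEA version of necessity follows from the parallel one as already noted. Beyond this topological step, the argument is routine bookkeeping with faces and support functionals.
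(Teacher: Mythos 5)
Your proposal is correct and follows essentially the same route as the paper: sufficiency via a common extreme supporting functional of $J(x)\cap J(\varepsilon y)$ together with $T(\overline{\sm(f)})\subset\overline{\sm(g)}$, and necessity by applying Theorem \ref{ParfacetTosmf} (the paper invokes Corollary \ref{corchar}) on each facet to get $T(\sm(f))\subset\overline{\sm(g)}$, then upgrading to $\sm(g)$. Your first upgrade argument, that the open set $T(\sm(f))$ lies in $\mathrm{int}\,\overline{\sm(g)}=\sm(g)$ since $\sm(g)$ is an open convex cone, is a mildly more elementary substitute for the paper's use of Lemma \ref{openmap}, which you also supply as the alternative.
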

	\begin{proof}
    First, we prove the sufficient part.  Let $(x,y)$ be a TEA pair in $\mathbb {X}.$ From Proposition \ref{par char} it follows that $J(x)\cap J(y)\neq \emptyset.$ Then it is easy to observe that  $\ext~J(x)\cap \ext~J(y)\neq \emptyset.$ Let $\phi\in \ext~J(x)\cap \ext~J(y).$ Then it follows from  Proposition \ref{propo}  that $x,y \in \overline{\sm(\phi)}.$ Then by the hypothesis there exists $\psi \in  \ext~ B_{\mathbb{Y}^*}$ such that $ Tx,Ty \in T(\overline{\sm(\phi)}) \subset \overline{\sm(\psi)}.$ Again, it follows from Proposition \ref{propo} that  $\psi\in \ext~J(Tx)\cap \ext~J(Ty).$ Then it follows from Proposition \ref{par char} that $(Tx,Ty)$ is a TEA pair in $\mathbb {Y}.$ This implies that $T$ preserves TEA pairs and so $T$ preserves parallel pairs.
    
    Next, we prove the necessary part. Let $f \in  \ext~ B_{\mathbb{X}^*}.$ Consider the facet $F=\{x\in S_{\mathbb{X}}: f(x)=\|x\|\}.$  From Corollary \ref{corchar}, it follows that there exists $u \in  F$ such that $J(Tu) \subset J(Tv)$ for all $v\in F.$ Let $g\in \ext~J(Tu).$ Then  for all $v\in F,$ $g\in J(Tv).$  Then it follows from  Proposition \ref{propo}  that $T(F)\subset \overline{\sm(g)}$ and so $T(\sm(f))\subset \overline{\sm(g)}.$
		Now, we show that $ T(\sm(f)) \subset \sm(g)$. On the contrary, suppose that there exists $x \in \sm(f)$ such that	$T x \in \overline{\sm(g)} \text { but } Tx \notin \sm(g).$ Then it follows from  Proposition \ref{propo}  that there exists $h(\neq g) \in  \ext~ B_{\mathbb{Y}^*}$ such that $h \in J(Tx).$ So $Tx \in \overline{\sm(h)}$. From Lemma \ref{openmap}, it follows that there exist $v_1, v_2 \in \sm(f)$ such that $Tv_1 \in \sm(g)$ and $Tv_2 \in \sm(h)$. Again, this leads to a contradiction. Therefore, $T(\sm(f)) \subset {\sm(g)}$.\\
        Now, for any   $g_1,g_2 \in  \ext~ B_{\mathbb{Y}^*}$ $g_1\neq g_2$ implies that $\sm(g_1)\cap \sm(g_2)=\emptyset.$ Thus for each $f \in  \ext~ B_{\mathbb{X}^*}$ there exists a unique $g \in  \ext~ B_{\mathbb{Y}^*}$ such that $ T(\sm(f)) \subset \sm(g).$ 
	\end{proof}
	If the operator in the above theorem is not bijective then the result may fail to hold.      
	\begin{example}
		Let $\mathbb{X}=\ell_{1}^3.$ Define $T\in \mathbb{L}(\mathbb{X})$ by $T(x,y,z)=(x, 0,z)$ for $(x,y,z)\in \mathbb{X}.$  Clearly, $T$ is not bijective and $T$ preserve parallel pairs. Here $u=(1,1,1)$ is a smooth point but $Tu=(1,0,1)$ is not a smooth point.
	\end{example}
 Next, we show that in finite-dimensional polyhedral Banach spaces, there exists $\epsilon >0$ such that every non-zero operator that  preserves $\epsilon$-approximate Birkhoff-James orthogonality must also preserve TEA pairs. For this purpose,  recall that \cite{C05} for $\epsilon\in[0,1),$  $x\in \mathbb {X}$ is  said to be $\epsilon$-approximate Birkhoff-James orthogonal ($\epsilon$-orthogonal) to $y\in \mathbb{X},$ denoted by $x\perp_B^{\epsilon}y,$ if for every scalar $\lambda,$
		\[\|x+\lambda y\|^2\geq \|x\|^2-2\epsilon\|x\|\|\lambda y\|.\]
We define the $\epsilon$-orthogonal set of  $x\in \mathbb{X}$ as  $x^{\perp_B^\epsilon} = \{ y \in \mathbb{X} : x \perp_B^\epsilon y \}. $
        
An operator $T\in\mathbb{L}(\mathbb{X},\mathbb {Y})$ is said to  preserve  $\epsilon$-orthogonality at $x\in\mathbb{X},$ if for any  $y\in \mathbb{X}, ~ x\perp_B y\implies Tx\perp_B^{\epsilon} Ty.$\\
 We also recall the constant $\epsilon_{\mathbb{X}}$ for an $n$-dimensional polyhedral Banach space
$\mathbb{X},$ as defined in \cite{MMPS25}, given by
	\[\epsilon_{\mathbb{X}}= \min\{\epsilon_{\phi\psi}: \phi,\psi\in  \ext~ B_{\mathbb{X}^*} \text{ are linearly independent}\},\]
     where $\epsilon_{\phi\psi}>0$ such that $x^{\perp_B^{\epsilon_{\phi\psi}}}\cap y^{\perp_B^{\epsilon_{\phi\psi}}}$ does not contain any hyperspace, for any $x\in\sm(\phi)$ and $y\in \sm(\psi).$  
    
		\begin{theorem}\label{aprox to par}
		Let $\mathbb{X},\mathbb{Y}$ be $n$-dimensional polyhedral Banach spaces and let $0\leq \epsilon<\epsilon_{\mathbb{Y}}.$ If a non-zero $T\in \mathbb{L}(\mathbb{X},\mathbb{Y})$ preserves $\epsilon$-orthogonality at each $x \in \mathbb{X}$, then  $T$ preserves TEA pairs.
	\end{theorem}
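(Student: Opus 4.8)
The plan is to show that the hypotheses force $T$ to be a bijection which, moreover, carries each smooth cone $\sm(f)$ into a \emph{single} smooth cone $\sm(g)$; once this ``cone structure'' is in place, Theorem~\ref{facetTofacet} instantly gives that $T$ preserves parallel pairs and TEA pairs. \textbf{Step 1 (bijectivity).} Since $\dim\mathbb{X}=\dim\mathbb{Y}=n$, it suffices to prove $\ker T=\{0\}$. Suppose not. Because $\ext B_{\mathbb{X}^*}$ spans $\mathbb{X}^*$, we have $\bigcap_{f\in\ext B_{\mathbb{X}^*}}\ker f=\{0\}$, so there is $f_0\in\ext B_{\mathbb{X}^*}$ with $\ker T\not\subseteq\ker f_0$. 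By Proposition~\ref{propo} the cone $\sm(f_0)$ is nonempty and open, hence not contained in the proper subspace $\ker T$, so we may pick $x\in\sm(f_0)\setminus\ker T$. Then $x^{\perp_B}=\ker f_0$, and since $\ker f_0$ is a hyperplane missing $\ker T$ we get $\ker f_0+\ker T=\mathbb{X}$, whence $T(x^{\perp_B})=T(\ker f_0)=\operatorname{range}(T)\ni Tx$. As $T$ preserves $\epsilon$-orthogonality at $x$, this yields $Tx\in T(x^{\perp_B})\subseteq (Tx)^{\perp_B^{\epsilon}}$, i.e. $Tx\perp_B^{\epsilon}Tx$ with $Tx\neq 0$; but the defining inequality $\|Tx+\lambda Tx\|^{2}\ge\|Tx\|^{2}-2\epsilon\|Tx\|\,\|\lambda Tx\|$ reduces to $(1+\lambda)^{2}\ge 1-2\epsilon|\lambda|$, which fails for all $\lambda\in(-(2-2\epsilon),0)$ since $\epsilon<1$ --- a contradiction. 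Hence $T$ is injective, so bijective, and in particular a homeomorphism.

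\textbf{Step 2 (each $\sm(f)$ lands in one $\sm(g)$).} Fix $f\in\ext B_{\mathbb{X}^*}$. The set $T(\sm(f))$ is open (homeomorphism applied to the open set $\sm(f)$, Proposition~\ref{propo}) and connected (image of the convex cone $\sm(f)$). Recall that the smooth points of $\mathbb{Y}$ are exactly $\bigcup_{g}\sm(g)$, and that each $\sm(g)$ is relatively clopen therein (openness by Proposition~\ref{propo}(i); relative closedness because a smooth $y\in\overline{\sm(g)}$ forces $g\in\ext J(y)=J(y)$ by Proposition~\ref{propo}(ii)). Thus once we know every point of $T(\sm(f))$ is smooth, connectedness will place $T(\sm(f))$ inside a single $\sm(g)$, unique since the cones $\sm(g)$ are pairwise disjoint. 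So suppose, for contradiction, that $Tx_0$ is non-smooth for some $x_0\in\sm(f)$. Then $\ext J(Tx_0)$ contains two linearly independent functionals $\psi_1,\psi_2\in\ext B_{\mathbb{Y}^*}$, and $Tx_0\in\overline{\sm(\psi_1)}\cap\overline{\sm(\psi_2)}$ by Proposition~\ref{propo}(ii). Since $T(\sm(f))$ is an open neighbourhood of $Tx_0$ meeting each $\overline{\sm(\psi_i)}$, Lemma~\ref{openmap} gives $v_1,v_2\in\sm(f)$ with $Tv_i\in\sm(\psi_i)$. Each $v_i\in\sm(f)$ has $v_i^{\perp_B}=\ker f$, so preservation of $\epsilon$-orthogonality at $v_1$ and $v_2$ forces $T(\ker f)\subseteq(Tv_1)^{\perp_B^{\epsilon}}\cap(Tv_2)^{\perp_B^{\epsilon}}$. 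But $T(\ker f)$ is a hyperspace of $\mathbb{Y}$ ($T$ bijective), while $\epsilon<\epsilon_{\mathbb{Y}}\le\epsilon_{\psi_1\psi_2}$ together with monotonicity of $\perp_B^{\epsilon}$ in $\epsilon$ shows $(Tv_1)^{\perp_B^{\epsilon}}\cap(Tv_2)^{\perp_B^{\epsilon}}\subseteq (Tv_1)^{\perp_B^{\epsilon_{\psi_1\psi_2}}}\cap(Tv_2)^{\perp_B^{\epsilon_{\psi_1\psi_2}}}$, which contains no hyperspace by the defining property of $\epsilon_{\psi_1\psi_2}$ --- a contradiction. Hence $T(\sm(f))$ consists of smooth points, and the claim follows.

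\textbf{Step 3 (conclusion).} Step 2 verifies precisely the condition appearing in Theorem~\ref{facetTofacet} for the bijective operator $T$: for each $f\in\ext B_{\mathbb{X}^*}$ there is a unique $g\in\ext B_{\mathbb{Y}^*}$ with $T(\sm(f))\subseteq\sm(g)$. Therefore $T$ preserves parallel pairs, and hence TEA pairs, which is the assertion.

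\textbf{Main obstacle.} The substance lies entirely in Step 2, and specifically in excluding a non-smooth image point: one must produce, out of a single non-smooth $Tx_0$, \emph{two genuinely smooth} image points $Tv_1,Tv_2$ whose supporting functionals are linearly independent, so that the constant $\epsilon_{\mathbb{Y}}$ can be brought to bear. This is exactly where bijectivity (via Lemma~\ref{openmap}) and the calibration of $\epsilon_{\mathbb{Y}}$ --- which is designed so that two ``different-direction'' smooth points can never share a common $\epsilon$-orthogonal hyperspace --- are essential; the rest (Steps 1 and 3) is bookkeeping around Theorem~\ref{facetTofacet} and Proposition~\ref{propo}.
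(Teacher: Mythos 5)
Your proof is correct, and its final reduction coincides with the paper's: everything is funnelled through Theorem \ref{facetTofacet} once one knows that each cone $\sm(f)$, $f\in \ext~B_{\mathbb{X}^*}$, is mapped by $T$ into a single cone $\sm(g)$. The difference lies in how that cone-mapping property is obtained: the paper simply cites \cite[Th. 2.10]{MMPS25} for it, whereas you derive it from scratch --- Step 1 showing that preservation of $\epsilon$-orthogonality (with $\epsilon<1$) forces $\ker T=\{0\}$, and Step 2 combining Proposition \ref{propo}, Lemma \ref{openmap}, the monotonicity of $\perp_B^{\epsilon}$ in $\epsilon$, and the defining property of $\epsilon_{\mathbb{Y}}$ to exclude non-smooth images of smooth points, then using connectedness of $T(\sm(f))$ together with the relative clopenness of the cones $\sm(g)$ inside the set of smooth points of $\mathbb{Y}$. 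This buys two things: a self-contained argument that does not rest on the external reference, and an explicit verification that $T$ is bijective, which is needed to invoke Theorem \ref{facetTofacet} as stated (the theorem under discussion only assumes $T\neq 0$, and the paper leaves the bijectivity issue buried in the citation; alternatively one can observe that the sufficiency direction of Theorem \ref{facetTofacet} does not actually use bijectivity). Your Steps 1--2 are presumably close in spirit to the proof of the cited result in \cite{MMPS25}, but as written they constitute a genuinely more detailed route to the same conclusion, and all the individual steps (James' characterization $x^{\perp_B}=\ker f$ at smooth points, linear independence of two extreme functionals in $J(Tx_0)$ when $Tx_0\neq 0$, and the hyperspace $T(\ker f)$ contradicting the definition of $\epsilon_{\psi_1\psi_2}$) check out.
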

    \begin{proof}
        Let  $T\in \mathbb{L}(\mathbb{X},\mathbb{Y})$ preserve $\epsilon$-orthogonality at each $x \in \mathbb{X}.$ Then from \cite[Th. 2.10]{MMPS25}, it follows that for each $f \in  \ext~ B_{\mathbb{X}^*}$ there exists a unique $g \in  \ext~ B_{\mathbb{Y}^*}$ such that $ T(\sm(f)) \subset \sm(g)$ and so $ T(\overline{\sm(f)}) \subset \overline{\sm(g)}.$  Then it follows from Theorem \ref{facetTofacet} that $T$ preserves parallel TEA  pairs.
    \end{proof}
		Now, we explore the  connection between the number of facets of the unit balls of finite-dimensional polyhedral Banach spaces and the preservation of parallel (or TEA) pairs by a bijective operator.
	\begin{theorem}\label{distinct}
		Let $\mathbb{X},\mathbb{Y}$ be finite-dimensional polyhedral Banach spaces.    Let a bijective operator $T\in \mathbb{L}(\mathbb{X},\mathbb{Y})$ preserve parallel (or TEA) pairs. Then the following results hold:
		\begin{itemize}
			\item[(i)] $|\ext~ B_{\mathbb{X}^*}|\geq|\ext~ B_{\mathbb{Y}^*}|.$ 
			\item[(ii)]  $|\ext~ B_{\mathbb{X}^*}|=|\ext~ B_{\mathbb{Y}^*}|$ if and only if  for any two distinct $f_1,f_2\in \ext~ B_{\mathbb{X}^*}$ there exist two distinct $g_1,g_2\in \ext~ B_{\mathbb{Y}^*} $ such that  $T(\sm(f_1)) \subset\sm(g_1)$ and $T(\sm(g_1)) \subset \sm(g_2).$
		\end{itemize}
	\end{theorem}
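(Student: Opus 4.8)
The plan is to reduce the whole statement to the induced correspondence on extreme points of the dual balls furnished by Theorem \ref{facetTofacet}. Since $T$ is bijective and preserves parallel (equivalently, TEA) pairs, Theorem \ref{facetTofacet} attaches to every $f\in\ext B_{\mathbb{X}^*}$ a \emph{unique} $g\in\ext B_{\mathbb{Y}^*}$ with $T(\sm(f))\subset\sm(g)$; I would write $\Phi\colon\ext B_{\mathbb{X}^*}\to\ext B_{\mathbb{Y}^*}$ for the resulting map, $\Phi(f)=g$. Both index sets are finite because a finite-dimensional polyhedral Banach space has a polyhedral dual. The point I want to make is that the entire theorem follows once $\Phi$ is shown to be surjective: then $|\ext B_{\mathbb{X}^*}|\geq|\ext B_{\mathbb{Y}^*}|$, which is $(i)$; and for $(ii)$, the displayed condition is precisely the assertion that $\Phi$ is injective (by uniqueness in Theorem \ref{facetTofacet}, the $g_i$ there must equal $\Phi(f_i)$, and $T(\sm(f_2))\subset\sm(g_2)$ is the intended reading of the second inclusion), so a surjection between finite sets of equal cardinality is the condition $(ii)$ being equivalent to equality of cardinalities.

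So the real work is the surjectivity of $\Phi$. First I would upgrade the containment $T(\sm(f))\subset\sm(\Phi(f))$ to $T(\overline{\sm(f)})\subset\overline{\sm(\Phi(f))}$ using only continuity of $T$. By Proposition \ref{propo}(iii), $\mathbb{X}=\bigcup_{f\in\ext B_{\mathbb{X}^*}}\overline{\sm(f)}$, and since $T$ is onto, $\mathbb{Y}=T(\mathbb{X})=\bigcup_{f}T(\overline{\sm(f)})\subset\bigcup_{f}\overline{\sm(\Phi(f))}$. Next I would record the disjointness fact: if $g_0\neq g$ in $\ext B_{\mathbb{Y}^*}$ and $y\in\overline{\sm(g)}$, then $g\in\ext J(y)$ by Proposition \ref{propo}(ii), so $J(y)\neq\{g_0\}$ and hence $y\notin\sm(g_0)$; thus $\sm(g_0)\cap\overline{\sm(g)}=\emptyset$ for every $g\neq g_0$. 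Now fix $g_0\in\ext B_{\mathbb{Y}^*}$ and suppose $g_0$ is not in the image of $\Phi$. Then $\bigcup_{f}\overline{\sm(\Phi(f))}\subset\bigcup_{g\neq g_0}\overline{\sm(g)}$, which is disjoint from $\sm(g_0)$; combined with the covering of $\mathbb{Y}$ above, this forces $\sm(g_0)=\emptyset$, contradicting Proposition \ref{propo}(i). Hence $\Phi$ is surjective.

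With surjectivity in hand, $(i)$ is immediate. For $(ii)$, the forward direction uses that a surjection of finite sets of equal size is injective, so $\Phi(f_1)\neq\Phi(f_2)$ whenever $f_1\neq f_2$, and then $g_i:=\Phi(f_i)$ are the required distinct functionals with $T(\sm(f_i))\subset\sm(g_i)$; the reverse direction uses that injectivity of $\Phi$ together with its already established surjectivity makes it a bijection, giving equality of cardinalities. The main obstacle is genuinely just the surjectivity argument, and within it the two ingredients $\sm(g_0)\cap\overline{\sm(g)}=\emptyset$ for $g\neq g_0$ and the covering $\mathbb{Y}=\bigcup_{f}\overline{\sm(\Phi(f))}$; everything after that is elementary finite combinatorics, with no new idea needed beyond Theorem \ref{facetTofacet} and Proposition \ref{propo}.
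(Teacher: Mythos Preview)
Your proposal is correct and follows essentially the same approach as the paper: both use Theorem \ref{facetTofacet} to define the assignment $f\mapsto g$, establish its surjectivity via the covering $\mathbb{Y}=\bigcup_f T(\overline{\sm(f)})$ together with the disjointness $\sm(g_0)\cap\overline{\sm(g)}=\emptyset$ for $g\neq g_0$, and then read off both parts from elementary finite-set combinatorics. Your packaging in terms of an explicit map $\Phi$ and its surjectivity/injectivity is slightly cleaner than the paper's pointwise argument (which picks a single $v\in\sm(g)$ and traces back its preimage), but the substance is identical.
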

	\begin{proof}

		(i)	If possible suppose that $|\ext~ B_{\mathbb{X}^*}|<|\ext~ B_{\mathbb{Y}^*}|.$ Theorem \ref{facetTofacet} ensures that for every  $f\in \ext~B_{\mathbb{X}^*}$ there exists a unique $g \in \ext~ B_{\mathbb{Y}^*}$ such that $T(\sm(f)) \subset \sm(g).$ Then there exists $g\in \ext~ B_{\mathbb{Y}^*}$ such that there does not exist any $f\in \ext~B_{\mathbb{X}^*}$ such that $T(\sm (f))\subset \sm (g).$
		 Let $v\in \sm (g).$  Since $T$ is bijective, there exists a non-zero $u\in \mathbb{X}$ such that $Tu=v.$ Let $\phi\in \ext~B_{\mathbb{X}^*} $ such that $\phi\in J(u).$ Then it follows from  Proposition \ref{propo}  that $u\in \overline{\sm(\phi)}.$
		Then there exists a unique $\psi \in \ext~ B_{\mathbb{Y}^*}$ such that $T(\sm(\phi)) \subset \sm(\psi).$
		Therefore, $Tu=v\in \overline{\sm(\psi)}\cap \sm(g).$ If $g\neq \psi$ then $\overline{\sm(\psi)}\cap \sm(g)=\emptyset,$ which is not possible. Thus, $\psi=g$ and so  $T(\sm(\phi)) \subset \sm(g),$ which is a contradiction. Thus $|\ext~ B_{\mathbb{X}^*}|\geq|\ext~ B_{\mathbb{Y}^*}|.$

		(ii) Let $|\ext~ B_{\mathbb{X}^*}|=|\ext~ B_{\mathbb{Y}^*}|.$ If possible suppose that there exist two distinct $f_1,f_2\in \ext~ B_{\mathbb{X}^*}$ such that   $T(\sm(f_1)) \cup T(\sm(f_2))  \subset\sm(g)$  for some $g\in  \ext~ B_{\mathbb{Y}^*}. $ Then there exists $\psi\in \ext~ B_{\mathbb{Y}^*}$ such that there does not exist any $\phi\in \ext~B_{\mathbb{X}^*}$ such that $T(\sm (\phi))\subset \sm (\psi).$ Now, using similar arguments as in (i), we arrive at a contradiction. Thus,  for any two distinct $f_1,f_2\in \ext~ B_{\mathbb{X}^*}$ there exist two distinct $g_1,g_2\in \ext~ B_{\mathbb{Y}^*} $ such that  $T(\sm(f_1)) \subset\sm(g_1)$ and $T(\sm(g_1)) \subset \sm(g_2).$\\
		Conversely, let for any two distinct $f_1,f_2\in \ext~ B_{\mathbb{X}^*}$ there exist two distinct $g_1,g_2\in \ext~ B_{\mathbb{Y}^*} $ such that  $T(\sm(f_1)) \subset\sm(g_1)$ and $T(\sm(g_1)) \subset \sm(g_2).$ Then  Theorem \ref{facetTofacet} ensures that  $|\ext~ B_{\mathbb{X}^*}|\leq|\ext~ B_{\mathbb{Y}^*}|.$  From (i), it follows that $|\ext~ B_{\mathbb{X}^*}|\geq|\ext~ B_{\mathbb{Y}^*}|.$ Therefore, $|\ext~ B_{\mathbb{X}^*}|=|\ext~ B_{\mathbb{Y}^*}|.$
	\end{proof}
	For the finite-dimensional polyhedral Banach spaces whose unit spheres have the same number of facets, we have the following result.
	\begin{theorem}\label{cardpreserve1}
		Let $\mathbb{X},\mathbb{Y}$ be $n$-dimensional polyhedral Banach spaces such that $|\ext~ B_{\mathbb{X}^*}|$ $=|\ext~ B_{\mathbb{Y}^*}|.$  If a bijective operator  $T \in \mathbb{L}(\mathbb{X},\mathbb{Y})$ preserves parallel (or TEA) pairs then the following results hold true.
		\begin{itemize} \label{card fac equ}
			\item[(i)]	For each $f \in  \ext~ B_{\mathbb{X}^*}$ there exists a unique  $g \in  \ext~ B_{\mathbb{Y}^*}$ such that  $T(\sm(f)) = \sm(g).$ 
			\item[(ii)] For each non-zero $x\in \mathbb{X},$  $|\ext ~J(x)| =|\ext ~J(T x)|$.
		\end{itemize}
	\end{theorem}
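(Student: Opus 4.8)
The plan is to reduce both parts to one structural fact: that $T$ carries each closed smooth cone $\overline{\sm(f)}$ of $\mathbb{X}$ \emph{onto} the corresponding closed smooth cone of $\mathbb{Y}$. I would first set up the facet correspondence. By Theorem \ref{facetTofacet}, for every $f\in\ext~B_{\mathbb{X}^*}$ there is a unique $g\in\ext~B_{\mathbb{Y}^*}$ with $T(\sm(f))\subset\sm(g)$; write $g=\Phi(f)$. Since $|\ext~B_{\mathbb{X}^*}|=|\ext~B_{\mathbb{Y}^*}|$, Theorem \ref{distinct}(ii) says $\Phi$ sends distinct facet functionals of $\mathbb{X}$ to distinct ones of $\mathbb{Y}$, so $\Phi$ is injective and, the two finite sets being of equal size, bijective. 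Because $\mathbb{X},\mathbb{Y}$ are finite-dimensional, $T$ is a linear homeomorphism, so $T(\overline{\sm(f)})$ is closed, and continuity gives $T(\overline{\sm(f)})\subset\overline{\sm(\Phi(f))}$.

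The heart of the proof is the reverse inclusion $\overline{\sm(\Phi(f))}\subset T(\overline{\sm(f)})$. Suppose for some $f$ there were $v_0\in\sm(\Phi(f))$ with $v_0\notin T(\overline{\sm(f)})$. Since $\sm(\Phi(f))$ is open (Proposition \ref{propo}(i)) and $T(\overline{\sm(f)})$ is closed, I would pick an open set $U$ with $v_0\in U\subset\sm(\Phi(f))$ and $U\cap T(\overline{\sm(f)})=\emptyset$. By Proposition \ref{propo}(iii) the sets $\overline{\sm(f')}$, $f'\in\ext~B_{\mathbb{X}^*}$, cover $\mathbb{X}$, so applying the bijection $T$ the sets $T(\overline{\sm(f')})$ cover $\mathbb{Y}$; hence $U\subset\bigcup_{f'\neq f}T(\overline{\sm(f')})\subset\bigcup_{f'\neq f}\overline{\sm(\Phi(f'))}$. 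But $U\subset\sm(\Phi(f))$ is disjoint from each $\sm(\Phi(f'))$ with $f'\neq f$ (the sets $\sm(\cdot)$ attached to distinct extreme functionals are disjoint, and $\Phi$ is injective), so $U$ lies in the finite union of the boundaries $\overline{\sm(\Phi(f'))}\setminus\sm(\Phi(f'))$, a nowhere dense set — impossible for a non-empty open set. Thus $\sm(\Phi(f))\subset T(\overline{\sm(f)})$, and taking closures yields $T(\overline{\sm(f)})=\overline{\sm(\Phi(f))}$.

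With this identity in hand, part (i) follows quickly: the inclusion $T(\sm(f))\subset\sm(\Phi(f))$ is Theorem \ref{facetTofacet}, and for the reverse I would take $v\in\sm(\Phi(f))$ and write $v=Tu$; since $v\in\overline{\sm(\Phi(f))}=T(\overline{\sm(f)})$ we get $u\in\overline{\sm(f)}$, and if $u$ were not smooth, Proposition \ref{propo}(ii) would produce $f'\neq f$ with $u\in\overline{\sm(f')}$, whence $v=Tu\in\overline{\sm(\Phi(f'))}$, so the open set $\sm(\Phi(f))$ would meet $\sm(\Phi(f'))$ with $\Phi(f')\neq\Phi(f)$ — a contradiction. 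Hence $u\in\sm(f)$, giving $T(\sm(f))=\sm(\Phi(f))$; uniqueness of $g$ is clear since distinct sets $\sm(\cdot)$ are non-empty and disjoint. For part (ii), Proposition \ref{propo}(ii) gives, for non-zero $x$, that $f\in\ext~J(x)\iff x\in\overline{\sm(f)}\iff Tx\in T(\overline{\sm(f)})=\overline{\sm(\Phi(f))}\iff\Phi(f)\in\ext~J(Tx)$, so $\Phi$ restricts to a bijection $\ext~J(x)\to\ext~J(Tx)$ and $|\ext~J(x)|=|\ext~J(Tx)|$.

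The main obstacle is the second paragraph: it is the only place where the equal-number-of-facets hypothesis (equivalently, the bijectivity of $\Phi$) is genuinely used, and it relies on the Baire-type observation that a non-empty open subset of $\mathbb{Y}$ cannot be swallowed by finitely many nowhere dense boundary sets. Everything after $T(\overline{\sm(f)})=\overline{\sm(\Phi(f))}$ is routine bookkeeping with Proposition \ref{propo}.
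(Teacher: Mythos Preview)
Your proof is correct. The approach differs from the paper's in organization rather than in ideas: the paper proves (i) directly by taking $y\in\sm(g)\setminus T(\sm(f))$, observing that $x=T^{-1}y\notin\sm(f)$ forces some $f_1\neq f$ to lie in $\ext~J(x)$, and then pushing forward to reach $y\in\overline{\sm(\Phi(f_1))}\cap\sm(g)=\emptyset$ --- no Baire argument and no intermediate closed-cone identity are needed. For (ii) the paper then argues $|\ext~J(x)|\leq|\ext~J(Tx)|$ via the injective map $f\mapsto\Phi(f)$ and handles the reverse inequality by a separate continuity/sequence argument using $T^{-1}$. Your route instead invests in the single structural identity $T(\overline{\sm(f)})=\overline{\sm(\Phi(f))}$, established through a Baire-type covering argument, and then reads off both (i) and (ii) from it. What you gain is a cleaner derivation of (ii), since the closed-cone identity makes the bijection $\ext~J(x)\leftrightarrow\ext~J(Tx)$ immediate; what the paper gains is a shorter path to (i), since its pointwise argument avoids the nowhere-dense/boundary considerations altogether. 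Both proofs rest on the same inputs (Theorems~\ref{facetTofacet}, \ref{distinct} and Proposition~\ref{propo}) and the same crucial use of the equal-facet-count hypothesis to make $\Phi$ injective.
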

	\begin{proof} 
		(i)  	Let $f\in \ext~B_{\mathbb{X}^*}.$ From Theorem \ref{facetTofacet}, it follows that  $T(\sm(f)) \subset \sm(g), $ for a unique $g \in \ext~ B_{\mathbb{Y}^*}.$ We show that   $T(\sm(f)) = \sm(g). $ On the contrary suppose that $y\in \sm(g)\setminus T(\sm(f)).$ Since $T$ is bijective, there exists $x(\neq 0)\in \mathbb{X}$ such that $Tx=y.$ As $y\notin T(\sm(f)),$ it follows that $x\notin  \sm(f)$ and hence,  it follows from  Proposition \ref{propo}  that there exists $f_1(\neq f)\in \ext~B_{\mathbb{X}^*} $ such that $f_1\in \ext~J(x).$ Thus $x\in \overline{\sm(f_1)}.$  From Theorem \ref{distinct} (ii), it follows that there exists  $g_1(\neq g) \in \ext~ B_{\mathbb{Y}^*}$ such that $T(\sm(f_1)) \subset \sm(g_1).$ Thus, $Tx\in\overline{\sm(g_1)} $ and so $Tx=y\in \overline{\\sm(g_1)}\cap\sm(g)=\emptyset.$ This is a contradiction. Thus, $T(\sm(f)) = \sm(g). $\\

		(ii) 	Let $x(\neq 0) \in \mathbb{X}$ and let $f\in \ext~J(x)$ be arbitrary. From Proposition \ref{propo}, it follows that $x\in \overline{ \sm(f)}.$ Now, it follows from (i) that there exists a unique $g \in \ext~ B_{\mathbb{Y}^*}$ such that $T(\sm(f)) = \sm(g).$ Thus, $Tx\in  \overline{\sm(g)}$ and so $g\in \ext~J(Tx).$  Therefore, for each $f \in \ext~ J(x),$ there exists a unique $g\in \ext~J(Tx)$ such that  $T(\sm(f))= \sm(g).$ From Theorem \ref{distinct} (ii), it follows that $|\ext ~J(x)| \leq |\ext ~J(T x)|$. 
		If possible let $|\ext ~J(x)|< |\ext ~J(T x)|$.
 	 Then there exists $\xi\in \ext~J(Tx)$  such that  there does not exist $h\in \ext ~J(x)$ such that $T(\sm(h))= \sm(\xi).$ Since $|\ext~ B_{\mathbb{X}^*}|$ $=|\ext~ B_{\mathbb{Y}^*}|,$ it follows that  there exists $\zeta \in \ext~ B_{\mathbb{X}^*}$ such that $T(\sm(\zeta)) = \sm(\xi)$ and so $\zeta \notin \ext~J(x).$ Now, $Tx\in \overline{ \sm(\xi)}.$ Let a sequence $\{x_n\}\subset\sm(\zeta)$ be such that $Tx_n\longrightarrow Tx.$ Now, as $T$ is bijective, $T^{-1}$ exists. This implies that $T^{-1}(Tx_n)\longrightarrow  T^{-1}(Tx)\implies x_n \longrightarrow  x$ and so $x\in \sm(\zeta).$ From Proposition \ref{propo}, it follows that $\zeta\in \ext~J(x),$ which is a  
 	 contradiction.  Therefore $| \ext~J(x)|= |\ext~J(T x)|$
		
	\end{proof}
	As a direct consequences of the above theorem, we obtain the following two corollaries.
	\begin{cor}\label{non smooth}
		Let $\mathbb{X}$ be a finite-dimensional polyhedral Banach space.   If a bijective operator $T \in \mathbb{L}(\mathbb{X})$ preserves parallel( or TEA) pairs, then $T$ maps each non-smooth point of $\mathbb{X}$ to a non-smooth point of $\mathbb{X}.$
	\end{cor}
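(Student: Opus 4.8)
The plan is to derive this as an immediate consequence of Theorem \ref{cardpreserve1}(ii). Since here the domain and codomain coincide, the hypothesis $|\ext~B_{\mathbb{X}^*}| = |\ext~B_{\mathbb{Y}^*}|$ of Theorem \ref{cardpreserve1} is trivially satisfied, so that theorem applies to the given bijective $T \in \mathbb{L}(\mathbb{X})$ that preserves parallel (or TEA) pairs. Thus for every non-zero $x \in \mathbb{X}$ we have $|\ext~J(x)| = |\ext~J(Tx)|$.

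Next I would record the elementary fact linking smoothness to the number of extreme supporting functionals: for a non-zero $x$, the set $J(x)$ is a nonempty compact convex subset of $\mathbb{X}^*$ (finite-dimensionality), so by the Krein--Milman theorem it is the convex hull of its extreme points; consequently $J(x)$ is a singleton if and only if $|\ext~J(x)| = 1$, and $x$ fails to be smooth precisely when $|\ext~J(x)| \geq 2$. Now let $x$ be a non-smooth point of $\mathbb{X}$; in particular $x \neq 0$, so $Tx \neq 0$ by injectivity of $T$. Since $x$ is non-smooth, $|\ext~J(x)| \geq 2$, and therefore $|\ext~J(Tx)| = |\ext~J(x)| \geq 2$, which forces $Tx$ to be non-smooth. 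Hence $T$ sends non-smooth points to non-smooth points, as claimed.

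There is essentially no obstacle here: the entire content is packaged in Theorem \ref{cardpreserve1}(ii), and the only thing to check is the (routine) translation between ``$J(x)$ is a singleton'' and ``$|\ext~J(x)| = 1$'' in the finite-dimensional polyhedral setting. If one wanted a fully self-contained argument one could instead invoke Theorem \ref{distinct}(ii) together with Theorem \ref{facetTofacet} to show that distinct extreme functionals in $J(x)$ are carried to distinct extreme functionals in $J(Tx)$, but citing Theorem \ref{cardpreserve1}(ii) directly is the cleanest route.
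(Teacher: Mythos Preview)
Your proposal is correct and matches the paper's approach exactly: the paper states this corollary as an immediate consequence of Theorem~\ref{cardpreserve1}, and your argument fills in precisely the routine details (trivial equality $|\ext~B_{\mathbb{X}^*}|=|\ext~B_{\mathbb{Y}^*}|$ when $\mathbb{X}=\mathbb{Y}$, and the Krein--Milman translation between smoothness and $|\ext~J(x)|=1$).
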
 
	
	\begin{cor}\label{corsmoothpreserve}
		Let $\mathbb{X}$ be a finite-dimensional polyhedral Banach space. Suppose that for any  $x_1, x_2 \in \mathbb{X},$  the order of smoothness of $x_1>$  the order of smoothness of $x_2~\implies|\ext~J(x_1)|>|\ext~J(x_2)|.$ If a bijective operator $T\in \mathbb{L}(\mathbb{X})$ preserves parallel pairs, then for each $z\in \mathbb{X},$  the order of smoothness of $Tz$ is equal to the order of smoothness of $z.$
	\end{cor}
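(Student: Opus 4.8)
The plan is to read this off from Theorem~\ref{cardpreserve1}. Since here $T$ maps $\mathbb{X}$ to itself, the codomain coincides with the domain, so the hypothesis $|\ext~B_{\mathbb{X}^*}|=|\ext~B_{\mathbb{Y}^*}|$ of Theorem~\ref{cardpreserve1} holds trivially with $\mathbb{Y}=\mathbb{X}$. As $T$ is moreover bijective and preserves parallel pairs, Theorem~\ref{cardpreserve1}(ii) applies and yields
\[
|\ext~J(z)|=|\ext~J(Tz)| \qquad\text{for every non-zero } z\in\mathbb{X}.
\]

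Next I would fix a non-zero $z\in\mathbb{X}$; bijectivity of $T$ forces $Tz\neq 0$, so both $z$ and $Tz$ have a well-defined order of smoothness, which I denote by $k$ and $k'$ respectively. Suppose, towards a contradiction, that $k\neq k'$. Then one of the two is strictly larger; say $k'>k$ (the case $k>k'$ is handled identically after interchanging the roles of $z$ and $Tz$ below). Applying the standing monotonicity hypothesis on $\mathbb{X}$ to the pair $x_1=Tz$, $x_2=z$, the inequality $k'>k$ gives $|\ext~J(Tz)|>|\ext~J(z)|$, contradicting the equality displayed above. Hence $k=k'$, which is precisely the assertion that the order of smoothness of $Tz$ equals that of $z$.

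Once Theorem~\ref{cardpreserve1} is in hand this is essentially a one-line contrapositive argument, so I do not expect a substantive obstacle; the two points deserving a moment of care are that the cardinality identity of Theorem~\ref{cardpreserve1}(ii) genuinely applies in this self-map setting (it does, because $\mathbb{X}=\mathbb{Y}$ makes the equal-facet-count hypothesis automatic) and that the monotonicity hypothesis is stated with strict inequalities and symmetrically enough to be invoked with either $z$ or $Tz$ in the role of the vector of larger smoothness order. One may finally remark that, under its additional monotonicity hypothesis, this result refines Corollary~\ref{non smooth}: the case $k=1$ recovers that smooth points are sent to smooth points, hence non-smooth points to non-smooth points.
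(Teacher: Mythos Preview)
Your argument is correct and is exactly the intended one: the paper presents this corollary as a direct consequence of Theorem~\ref{cardpreserve1} without further proof, and your deduction---applying part~(ii) with $\mathbb{Y}=\mathbb{X}$ to obtain $|\ext~J(z)|=|\ext~J(Tz)|$ and then invoking the monotonicity hypothesis contrapositively---is precisely how that consequence is read off.
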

	
	The result mentioned in the above corollary does not necessarily hold when the operator is not bijective.
	\begin{example}
		Let $\mathbb{X}=\ell_{1}^3.$ Define $T\in \mathbb{L}(\mathbb{X})$ by $T(x,y,z)=(x, 2x,z)$ for $(x,y,z)\in \mathbb{X}.$  Clearly, $T$ is not bijective and $T$ preserve parallel pairs. Here $u=(1,0,0)$ is a $3$-smooth point but $Tu=(1,2,0)$ is a $2$-smooth point.
	\end{example}
	
Finally, we present the following characterization of isometries on a class of Banach spaces. 
	
    	\begin{theorem}\label{isometry}
		Let $\mathbb{X}$ be a finite-dimensional polyhedral Banach space such that for each $u\in \ext~B_{\mathbb{X}},$ $|\ext~J(u)|>|\ext~J(v)|$ for all non-zero $v\in \mathbb{X}\setminus \ext~B_{\mathbb{X}}.$ Let $T\in \mathbb{L}(\mathbb{X})$ be a norm-one operator. Then the following conditions are equivalent: 
		\begin{itemize}
       \item[(i)]  $T$ is an isometry.
       \item[(ii)] $T$ is bijective and  preserves parallel  pairs and $\| Tu\|=\|Tv\|$ for any $u,v \in \ext~B_\mathbb{X}.$
			\item[(iii)] $T$ preserves $\epsilon$-orthogonality for any $0\leq\epsilon<\epsilon_{\mathbb{X}}$ at each point of $\mathbb{X}$ and   $\| Tu\|=\|Tv\|$ for any $u,v \in \ext~B_\mathbb{X}.$
		\end{itemize} 
	\end{theorem}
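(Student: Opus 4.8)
The plan is to establish $(i)\Rightarrow(ii)$, $(i)\Rightarrow(iii)$, $(ii)\Rightarrow(i)$ and $(iii)\Rightarrow(ii)$. The two implications out of $(i)$ are routine: an isometry of a finite-dimensional space is bijective, satisfies $\|Tx+\lambda Ty\|=\|x+\lambda y\|$ for every scalar $\lambda$ and hence preserves parallel pairs, satisfies $x\perp_{B}y\Leftrightarrow Tx\perp_{B}Ty$ and hence (since Birkhoff--James orthogonality trivially implies $\epsilon$-orthogonality for every $\epsilon\ge 0$) preserves $\epsilon$-orthogonality at each point for all $0\le\epsilon<\epsilon_{\mathbb X}$, and maps every extreme point of $B_{\mathbb X}$ to a point of norm $1$.

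The core of the theorem is $(ii)\Rightarrow(i)$. First I would note that $B_{\mathbb X}=\co(\ext B_{\mathbb X})$ and $x\mapsto\|Tx\|$ is convex and continuous, so $\|T\|=\max\{\|Tu\|:u\in\ext B_{\mathbb X}\}$; since the numbers $\|Tu\|$ are all equal and $\|T\|=1$, we get $\|Tu\|=1$, that is $Tu\in S_{\mathbb X}$, for every $u\in\ext B_{\mathbb X}$. Because $T$ is bijective (and trivially $|\ext B_{\mathbb X^{*}}|=|\ext B_{\mathbb X^{*}}|$), Theorem \ref{cardpreserve1}(ii) gives $|\ext J(Tu)|=|\ext J(u)|$ for each such $u$. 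If some $Tu$ were not an extreme point of $B_{\mathbb X}$ it would be a non-zero element of $\mathbb X\setminus\ext B_{\mathbb X}$, and the standing hypothesis on $\mathbb X$ would yield $|\ext J(u)|>|\ext J(Tu)|=|\ext J(u)|$, a contradiction; hence $T(\ext B_{\mathbb X})\subseteq\ext B_{\mathbb X}$. Being an injection of the finite set $\ext B_{\mathbb X}$ into itself, $T$ permutes $\ext B_{\mathbb X}$, so $T(B_{\mathbb X})=\co(T(\ext B_{\mathbb X}))=B_{\mathbb X}$; combining this with $T^{-1}(B_{\mathbb X})=B_{\mathbb X}$ and homogeneity gives $\|Tx\|=\|x\|$ for all $x$, i.e., $T$ is an isometry.

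For $(iii)\Rightarrow(ii)$, note first that $\|T\|=1$ makes $T$ non-zero, so Theorem \ref{aprox to par} already tells us $T$ preserves TEA pairs and hence parallel pairs, and the argument of the previous paragraph again forces $\|Tu\|=1$ for each $u\in\ext B_{\mathbb X}$. It remains only to show that $T$ is bijective. For this I would use \cite[Th.~2.10]{MMPS25} (exactly as in the proof of Theorem \ref{aprox to par}) to obtain, for each $f\in\ext B_{\mathbb X^{*}}$, a unique $\sigma(f)\in\ext B_{\mathbb X^{*}}$ with $T(\sm(f))\subseteq\sm(\sigma(f))$, and then argue that the threshold $\epsilon<\epsilon_{\mathbb X}$ forces $\sigma$ to be injective: if $\sigma(f_{1})=\sigma(f_{2})$ with $f_{1}\ne f_{2}$ (necessarily linearly independent), then for $x_{i}\in\sm(f_{i})$ the images $Tx_{1},Tx_{2}$ are smooth points sharing the supporting functional $\sigma(f_{1})$, and tracking $T(x_{i}^{\perp_{B}})=T(\ker f_{i})$ inside $(Tx_{1})^{\perp_{B}^{\epsilon}}=(Tx_{2})^{\perp_{B}^{\epsilon}}$ produces a hyperspace contained in $x_{1}^{\perp_{B}^{\epsilon}}\cap x_{2}^{\perp_{B}^{\epsilon}}$, contradicting the definition of $\epsilon_{\mathbb X}$. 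Once $\sigma$ is an injection, hence a bijection, of the finite set $\ext B_{\mathbb X^{*}}$, the facets of $B_{\mathbb X}$ through an extreme point $u$ correspond to $\ext J(u)$ and their $\sigma$-images form $|\ext J(u)|$ distinct elements of $\ext J(Tu)$ by Proposition \ref{propo}, so $|\ext J(Tu)|\ge|\ext J(u)|$ and the hypothesis on $\mathbb X$ forces $Tu\in\ext B_{\mathbb X}$; a further argument in the spirit of the proofs of Theorems \ref{facetTofacet} and \ref{distinct} then upgrades this to surjectivity of $T$, whence $T$ is bijective and $(ii)$ holds.

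The step I expect to be the real obstacle is the bijectivity of $T$ in $(iii)\Rightarrow(ii)$: everything else is bookkeeping on top of Theorems \ref{cardpreserve1}, \ref{aprox to par} and Proposition \ref{propo}, whereas here one must genuinely exploit $\epsilon<\epsilon_{\mathbb X}$ to prevent $T$ from gluing two facets of $B_{\mathbb X}$ together, and then convert that facet-level injectivity into surjectivity of $T$ (if \cite[Th.~2.10]{MMPS25} is not already strong enough to hand us bijectivity directly). A secondary point that needs care is the passage from ``$T$ permutes the extreme points of $B_{\mathbb X}$'' to ``$T$ preserves the norm everywhere'', which relies on $B_{\mathbb X}$ being the convex hull of its extreme points together with the bijectivity of $T$.
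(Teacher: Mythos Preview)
Your treatment of $(ii)\Rightarrow(i)$ is correct and is essentially the paper's argument: the paper also uses Theorem~\ref{cardpreserve1}(ii) together with the hypothesis on $\mathbb{X}$ to see that each $Tu$ with $u\in\ext B_{\mathbb{X}}$ must be a scalar multiple of an extreme point, then $\|T\|=1$ together with the equal-norm condition forces $T(\ext B_{\mathbb{X}})\subset\ext B_{\mathbb{X}}$, and finally $\|T^{-1}\|=1$ (equivalently, your $T(B_{\mathbb{X}})=B_{\mathbb{X}}$) yields the isometry.

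Where you diverge from the paper is in handling condition $(iii)$. The paper does not attempt $(iii)\Rightarrow(ii)$ at all: it simply invokes \cite[Th.~2.21]{MMPS25}, which already gives the full equivalence $(i)\Leftrightarrow(iii)$ under exactly the present hypotheses. So the ``real obstacle'' you single out --- producing bijectivity of $T$ from $\epsilon$-orthogonality preservation --- is absorbed entirely into that citation and never surfaces in the paper's proof.

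Your direct sketch of $(iii)\Rightarrow(ii)$ is not yet a proof. Two concrete issues: first, the assertion $(Tx_1)^{\perp_B^{\epsilon}}=(Tx_2)^{\perp_B^{\epsilon}}$ whenever $Tx_1,Tx_2$ share a supporting functional is false in general (the $\epsilon$-orthogonal set of a smooth point depends on the point, not just on its supporting functional), and the subsequent sentence about ``producing a hyperspace contained in $x_1^{\perp_B^{\epsilon}}\cap x_2^{\perp_B^{\epsilon}}$'' goes in the wrong direction --- preservation gives you $T(\ker f_i)\subset (Tx_i)^{\perp_B^{\epsilon}}$, information about the \emph{range}, whereas the definition of $\epsilon_{\mathbb{X}}$ concerns hyperspaces in the \emph{domain}. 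Second, even granting that $\sigma$ is a bijection of $\ext B_{\mathbb{X}^*}$, the passage from $T(\ext B_{\mathbb{X}})\subset\ext B_{\mathbb{X}}$ to surjectivity of $T$ is left as a placeholder; without bijectivity of $T$ you cannot invoke Theorem~\ref{cardpreserve1} or the convex-hull argument, so this step is circular as written. The clean route is the paper's: cite \cite[Th.~2.21]{MMPS25} for $(i)\Leftrightarrow(iii)$ and reserve the detailed work for $(ii)\Rightarrow(i)$, which you have done correctly.
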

	\begin{proof}
    (i)$\iff$(iii) follows from \cite[Th. 2.21]{MMPS25}.
	(i)$\implies$(ii) is trivial.
	We only prove	(ii)$\implies$(i). Let  $T\in \mathbb{L}(\mathbb{X})$
		preserve TEA pairs. From theorem \ref{card fac equ} and from the hypothesis,   it follows  that  $T$ maps each  element of $\ext~B_\mathbb{X}$  to a scalar multiple of some element of $\ext~B_\mathbb{X}.$ Since $\mathbb{X}$ is finite-dimensional, $T$ attains its norm at some element of $\ext~B_\mathbb{X}$ and so $\|Tu\| = \|T\| = 1$ for all $u\in \ext~B_\mathbb{X}.$ Thus, $T$ maps elements of $\ext~B_\mathbb{X}$ to elements of $\ext~B_\mathbb{X}.$ As $T$ is bijective, $T^{-1}$ exists and does the same. Now, $T^{-1}$ also attains its norm at some element of $\ext~B_\mathbb{X}$ and so $\|T^{-1}\| = 1$.
		Hence  $\|Tx\|\leq \|x\|=\|T^{-1}(Tx)\|\leq \|Tx\|$ for all $x\in\mathbb{X}.$  This implies $\|Tx\|=\|x\|$ for all  $x\in\mathbb{X}.$ Therefore, $T$ is an isometry. 
	\end{proof}

    \section*{Acknowledgments}
	Jayanta Manna would like to thank UGC, Govt. of India for the support in the form of Senior Research Fellowship under the mentorship of Professor Kallol Paul. 
	
	\section*{Declarations}
	
	\begin{itemize}
		\item Funding
		
		The research of Dr Kalidas Mandal and Professor Kallol Paul is supported by CRG Project bearing File no. CRG/2023/00716 of DST-SERB, Govt. of India.
		
		\item Conflict of interest
		
		The authors have no relevant financial or non-financial interests to disclose.
		
		\item Data availability 
		
		The manuscript has no associated data.
		
		\item Author contribution
		
		All authors contributed to the study. All authors read and approved the final version of the manuscript.
		
	\end{itemize}

\end{document}